\newtheorem{theorem}{Theorem}
[section]
\newtheorem{corollary}[theorem]{Corollary}
\newtheorem{definition}[theorem]{Definition}
\newtheorem{lemma}[theorem]{Lemma}
\newtheorem{proposition}[theorem]{Proposition}
\theoremstyle{remark}
\newtheorem{remark}[theorem]{Remark}
\def\CC{\mathbb{C}}
\def\FF{\mathbb{F}}
\def\NN{\mathbb{N}}
\def\QQ{\mathbb{Q}}
\def\RR{\mathbb{R}}
\def\ZZ{\mathbb{Z}}
\def\vv{\mathbf{v}}
\def\xx{\mathbf{x}}
\def\yy{\mathbf{y}}
\def\calC{\mathcal{C}}
\def\calD{\mathcal{D}}
\def\calF{\mathcal{F}}
\def\calH{\mathcal{H}}
\def\calM{\mathcal{M}}
\def\calo{\mathcal{O}}
\newcommand{\nrd}{\mathrm{nrd}\,}
\newcommand{\alg}[3]{ \left(\frac{#1,#2}{#3}\right)}
\newcommand{\trd}{\mathrm{trd}}
\definecolor{codegreen}{rgb}{0,0.6,0}
\definecolor{codegray}{rgb}{0.5,0.5,0.5}
\definecolor{codepurple}{rgb}{0.58,0,0.82}
\definecolor{backcolour}{rgb}{0.95,0.95,0.92}
\lstdefinestyle{sagestyle}{
    backgroundcolor=\color{backcolour},   
    commentstyle=\color{codegreen},
    keywordstyle=\color{magenta},
    numberstyle=\tiny\color{codegray},
    stringstyle=\color{codepurple},
    basicstyle=\ttfamily\footnotesize,
    breakatwhitespace=false,         
    breaklines=true,                 
    captionpos=b,                    
    keepspaces=true,                 
    numbers=none,                    
    numbersep=5pt,                  
    showspaces=false,                
    showstringspaces=false,
    showtabs=false,                  
    tabsize=2
}
\newcommand{\notalea}[1]{\todo[color=violet!40]{#1}}
\DeclareMathOperator{\QCFF}{QCFF}
\date{}
\title{Quaternionic $p$-adic continued fractions}
\author{Laura Capuano}
\address{Dipartimento di Matematica e Fisica, Università degli Studi Roma Tre, L.go S. L. Murialdo 1, 00146 Roma, Italy}
\email{laura.capuano@uniroma3.it}
\author{Marzio Mula}
\address{Dipartimento di Matematica, Università di Trento, Via Sommarive 14, I-38123 Povo (Trento), Italy}
\email{marzio.mula@unitn.it}
\author{Lea Terracini}
\address{Dipartimento di Informatica, Università di Torino, Corso Svizzera 185 - 10149 Torino, Italy}
\email{lea.terracini@unito.it}
\date{}
\subjclass[2020]{11J70, 11R52, 11D88, 11Y65}
\keywords{p-adic continued fractions, quaternion algebras, finiteness, adelic norms}
\begin{document}

\maketitle
\begin{abstract}We develop a theory of $p$-adic continued fractions for a quaternion algebra $B$ over $\QQ$ ramified at a rational prime $p$. Many properties holding in the commutative case can be proven also in this setting. In particular, we focus our attention on the characterization of elements having a finite continued fraction expansion. By means of a suitable notion of quaternionic height, we prove a criterion for finiteness. Furthermore, we draw some consequences about the solutions of a family of quadratic polynomial equations with coefficients in $B$.
\end{abstract}
\section{Introduction}\label{sect:Introductionzero}
Given a division algebra $B$ and a list $[a_0,\ldots, a_n]$ of non-zero elements in $B$,  the simple continued fraction
\begin{align*}
    a_0 + \cfrac{1}{a_1+ \cfrac{1}{\ddots + \cfrac{1}{a_n}}} 
\end{align*}
represents an element $\alpha\in B$. When $B$ is also provided with a topology, under some  hypotheses of completeness and convergence, infinite continued fractions can be considered too. In this general setting, several problems arise, concerning:
\begin{itemize}
\item representability of elements in the topological completion of $B$;
\item uniqueness of the representation;
\item existence of an algorithm to compute the continued fraction expansion of any representable element;
\item convergence and quality of the approximation;
\item characterizations of those elements in $B$ which are represented by finite continued fractions;
\item characterizations of those elements in $B$ which have a periodic continued fraction.
\end{itemize}
\par The questions above have been extensively studied in the ``classical'' case, i.e.\ when $B$ is the real field endowed with the Euclidean topology. The first contributions date back to the works of Wallis, Euler and Lagrange~\cite{brezinski2012}. More recent works, starting from~\cite{Ruban1970} and~\cite{Browkin1978}, deal with the case where $B$ is a number field endowed with a non-archimedean absolute value.
\par In this paper, we address the case where $B$ is a quaternion algebra over $\QQ$  ramified at a rational rime $p$, and the topology is the $p$-adic one. The main properties of such setting are summarized in Section~\ref{sect:Introduction}. While the idea of defining continued fractions over quaternions was already suggested by Hamilton~\cite{Hamilton1852}, a standard terminology for the $p$-adic case is still missing. Therefore, in analogy with the case of number fields~\cite{CapuanoMurruTerracini2021}, in Section~\ref{sec:contFrac} we define the notion of \emph{quaternionic type} associated to an order $R$ in $B$. Namely, this is a quadruple $\tau=(B,R,p,s)$ where $B$ is a quaternion algebra, $R$ is an order in $B$, $p$ a prime $\ge 3$ and $s$ a \lq\lq $p$-adic floor function\rq\rq\ taking values in $ R[ \frac  1 p ] $. 
Each quaternionic type gives rise to an algorithm that computes the continued fraction expansion of every element in the $p$-adic completion of $B$. We show that the convergents of such continued fractions enjoy many of the properties holding also in the commutative case. Given a quaternionic type $\tau$, we approach the problem of characterizing all the elements of $B$ whose continued fraction expansion is finite. To this end, in Section~\ref{sect:height} we introduce a notion of \emph{quaternionic height} for the pair $(B,R)$, which is a particular instance of a height function associated to an adelic norm as defined in \cite{Talamanca2004}. In Section~\ref{sect:criteria}, we use Northcott property to prove Theorem \ref{teo:muForInfiniteCF}, which provides a criterion to establish the finiteness of the continued fraction algorithm in some cases. Adopting a similar terminology as in~\cite{CapuanoMurruTerracini2021}, we say that a type $\tau$ satisfies the \emph{Quaternionic Continued Fraction Finiteness} ($\QCFF$) if every $\alpha \in B$ has a finite continued fraction expansion of type $\tau$.
Section \ref{sect:Bpq} deals with indefinite quaternion algebras of discriminant $pq$: in this case, we give an explicit expression for a maximal order, and we construct some concrete examples of quaternionic types for which the $\QCFF$ does not hold. Finally, in Section \ref{sect:polinomi} we draw some consequence of Theorem \ref{teo:muForInfiniteCF} regarding the existence of solutions of some quadratic polynomial equations with coefficients in~$B$.

\section*{Acknowledgements}
We thank Valerio Talamanca for useful conversations regarding his work on heights associated to an adelic norm. The three authors are part of the GNSAGA group of INdAM.

\section{Some generalities on quaternion algebras over $\QQ$}\label{sect:Introduction}
We shall denote by $\mathcal{M}$ the set of rational places, and by $\mathcal{M}^0$ the subset of non-archimedean places. Therefore,every $v\in \mathcal{M}^0$ corresponds to a rational prime $q$ and is associated to an absolute value $|\cdot |_q$ normalized in such a way that $|q|_q= \frac 1 q$. If $v=\infty $, then  $|\cdot |_\infty$ is the usual absolute value.\par
We refer to \cite{Vignéras1980,voight:quaternion} for the basic definitions and properties of quaternion algebras and orders. In what follows:
\begin{itemize}
    \item $B$ is a quaternion algebra over $\QQ$ of discriminant $\Delta>1$;
    \item $p$ is an odd prime dividing $\Delta$;
    \item $R$ is an order in $B$.
\end{itemize}
If $x\in B$, we shall denote  by $\nrd(x)$ and $\trd(x)$ the reduced norm and trace of $x$, respectively.\\
For every rational place $v \in \mathcal{M}$, we put
$$B_v=B\otimes_\QQ \QQ_v$$
and, if $v$ is a non-archimedean place,
$$R_v=R\otimes_\ZZ \ZZ_v.$$
Therefore, $R_v$ is an order in $B_v$.
\par Let us consider an odd prime $q$ such that $B$ is \emph{ramified} at $q$, i.e.\
$B_q$
is a division algebra.  Following~\cite[§13.3]{voight:quaternion}, $B_q$ contains a unique maximal order, that is
$$R_q^{max}=\{\alpha\in B_q\ |\ v_q(\mathrm{nrd}(\alpha))\geq 0\}.$$
Notice that $R_q^{max}$ is a local ring; moreover, the $q$-adic valuation $v_q \colon \mathbb{Q}_q \rightarrow \mathbb{R} \cup \{\infty\}$ can be extended  to a valuation
 on $B_q$, defined as
\[w_q(\alpha)=\frac{v_q(\nrd(\alpha))}{2}.\]
The map $w_q$ is a discrete valuation (see~\cite[Lemma 13.3.2]{voight:quaternion});  this ensures that one can define $q$-adic non-archimedean absolute value over $B_q$:
\begin{align}
\label{eqn:pAdicNormDef}
    |\alpha|_q=\left( \frac{1}{q} \right)^{w_q(\alpha)}\qquad \text{for each $\alpha \in B_q$}.
\end{align}

The following result shows that $B_q$ is unique up to isomorphism.
\begin{theorem}
\label{thm:Bnu}
    Up to $\mathbb{Q}_q$-algebra isomorphism, we have
    \begin{equation*}
        B_q \cong \alg{\mathbb{Q}_{q^2}}{q}{\mathbb{Q}_q}=\mathbb{Q}_{q^2} \oplus \mathbb{Q}_{q^2} j,
    \end{equation*}
    where $j^2=q$ and $\mathbb{Q}_{q^2}$ denotes the unique quadratic unramified\,\footnote{i.e. $q$ is also a generator for the maximal ideal of the valuation ring of $\mathbb{Q}_{q^2}$.} (separable) extension of $\mathbb{Q}_q$.
\end{theorem}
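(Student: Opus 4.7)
The plan is to show that in the unique (up to isomorphism) quaternion division algebra over $\mathbb{Q}_q$, one can always find embedded the unramified quadratic extension $\mathbb{Q}_{q^2}$ together with a ``uniformizer'' $j$ anticommuting with it.

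\emph{Step 1: embedding $\mathbb{Q}_{q^2}$ into $B_q$.} I would first exploit the valuation $w_q$ defined in~\eqref{eqn:pAdicNormDef}. Since $B_q$ is a division algebra of dimension $4$ over $\mathbb{Q}_q$ and $w_q$ takes values in $\frac{1}{2}\mathbb{Z}$, one has $e(B_q/\mathbb{Q}_q)f(B_q/\mathbb{Q}_q)=4$ with $e\leq 2$ (by the denominator $2$); general theory of central simple algebras over local fields then forces $e=f=2$, so the residue ring $R_q^{max}/\mathfrak{m}$ is the field $\mathbb{F}_{q^2}$. Lifting a generator of $\mathbb{F}_{q^2}/\mathbb{F}_q$ and using Hensel's lemma produces an element $\omega\in R_q^{max}$ whose minimal polynomial over $\mathbb{Q}_q$ defines the unramified quadratic extension. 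This yields an embedding $K:=\mathbb{Q}_{q^2}\hookrightarrow B_q$.

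\emph{Step 2: producing $j$ via Skolem--Noether.} The field $K$ is a maximal subfield of $B_q$ (as $[K:\mathbb{Q}_q]=2=\sqrt{\dim_{\mathbb{Q}_q} B_q}$). Let $\sigma$ be the nontrivial element of $\mathrm{Gal}(K/\mathbb{Q}_q)$. By the Skolem--Noether theorem, $\sigma$ is realized by conjugation by some $j\in B_q^{\times}$, so $jxj^{-1}=\sigma(x)$ for every $x\in K$. Since $j\notin K$ (otherwise $\sigma$ would be inner over $K$, which is impossible as $K$ is commutative and $\sigma\neq\mathrm{id}$), the elements $1,j$ form a $K$-basis and $B_q = K \oplus Kj$.

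\emph{Step 3: normalizing $j^2$.} The element $j^2$ commutes with $j$ trivially and with $K$ (since conjugation by $j^2$ acts as $\sigma^2=\mathrm{id}$), so $j^2\in Z(B_q)=\mathbb{Q}_q^\times$. Replacing $j$ by $kj$ for $k\in K^\times$ changes $j^2$ to $N_{K/\mathbb{Q}_q}(k)\,j^2$, so the class of $j^2$ is determined in $\mathbb{Q}_q^\times/N_{K/\mathbb{Q}_q}(K^\times)$. Because $K/\mathbb{Q}_q$ is unramified, local class field theory (or a direct computation with the surjectivity of the norm on residue fields) gives $N_{K/\mathbb{Q}_q}(K^\times)=\{x\in\mathbb{Q}_q^\times : v_q(x)\text{ even}\}$, so the quotient is $\mathbb{Z}/2\mathbb{Z}$ represented by $1$ and $q$. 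If $j^2$ were a norm, then $B_q$ would be isomorphic to $\left(\frac{K,1}{\mathbb{Q}_q}\right)\cong M_2(\mathbb{Q}_q)$, contradicting that $B_q$ is a division algebra; hence $j^2$ has odd valuation, and after replacing $j$ by a suitable $k^{-1}j$ we may take $j^2=q$. This gives $B_q\cong\left(\frac{\mathbb{Q}_{q^2},q}{\mathbb{Q}_q}\right)$, as desired.

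The main obstacle is Step 1: producing the unramified quadratic subfield. Once the valuation-theoretic input (that $R_q^{max}$ has residue field $\mathbb{F}_{q^2}$) is established, Steps 2 and 3 are essentially formal consequences of Skolem--Noether and the structure of norms from unramified extensions. In the write-up I would probably just cite the relevant sections of~\cite{voight:quaternion} for the residue-field computation, keeping the presentation concise.
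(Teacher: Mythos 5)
Your argument is correct: the paper itself proves this statement only by citing \cite[Thm.\,13.3.11]{voight:quaternion}, and your three steps (extracting the unramified quadratic subfield $\mathbb{Q}_{q^2}$ from the residue field $\mathbb{F}_{q^2}$ of the unique maximal order, realizing the nontrivial Galois automorphism by conjugation via Skolem--Noether, and normalizing $j^2$ modulo $N_{\mathbb{Q}_{q^2}/\mathbb{Q}_q}(\mathbb{Q}_{q^2}^\times)$ using that $B_q$ is nonsplit) are exactly the standard proof of that cited result. The only point stated loosely is the identity $ef=4$ in Step~1, which you rightly flag as the valuation-theoretic input to be cited rather than rederived.
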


\begin{proof}
See~\cite[Thm.\,13.3.11]{voight:quaternion}.
\end{proof}
\begin{remark}
\label{rem:globalUniformizer}
Notice that the standard generators of $B_q$ can be chosen in $B$. To see this, write $\mathbb{Q}_{q^2}\cong \mathbb{Q}_q[X]/f(X)$ for some irreducible quadratic polynomial $f(X)=X^2+a_1X+a_0 \in \mathbb{Q}_q[X]$. As a consequence of Krasner's Lemma~\cite[Cor.\,13.2.9]{voight:quaternion}, there exists a constant $\delta>0$ such that any polynomial $g(x)=X^2+b_1X+b_0$ with $|b_i-a_i|_q<\delta$ satisfies 
\[\mathbb{Q}_q[X]/f(X) \cong \mathbb{Q}_q[X]/g(X).\]
In particular, one can choose $g(X) \in \mathbb{Q}[X]$, and set the generator $i$ as a root of $g(X)$. Up to ``completing the square'' ($q$ is an odd prime), we can assume $i^2 \in \mathbb{Q}$.
\end{remark}

\section{Quaternionic continued fractions}\label{sec:contFrac}

In this section we introduce a suitable notion of $p$-adic quaternionic continued fractions. To do this, we first need to define an analogue of the usual floor function with good properties which guarantee the convergence of the algorithm. We start with the following definition.

\begin{definition} A \emph{$p$-adic floor function} for the pair $(B,R)$ is a function $s \colon B_p \rightarrow B$ such that
\begin{itemize}
    \item $|\alpha - s(\alpha)|_{p}<1$ for every $\alpha \in B_{p}$;
    \item $s(\alpha)\in R_q$ for every prime $q\not= p$;
    \item $s(0)=0$;
    \item $s(\alpha) = s(\beta)$ if $|\alpha - \beta|_{p} <1$.
\end{itemize}
\end{definition}

It is easy to see that, given a pair $(B,R)$, the $p$-adic floor function is not unique, as the following result shows.

\begin{theorem}
    \label{thm:ffexistence} There exist infinitely many $p$-adic floor functions for the pair $(B,R)$.
    \end{theorem}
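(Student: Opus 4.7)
My plan is to reinterpret the axioms on $s$ as a choice of coset representatives. Setting $\mathfrak{m}_p := \{x\in B_p : |x|_p < 1\}$, the ultrametric inequality makes $\mathfrak{m}_p$ an additive subgroup of $B_p$, and conditions (1) and (4) force $s$ to factor through the quotient $B_p/\mathfrak{m}_p$, with $s(\alpha)$ lying in the same coset as $\alpha$. Since the localization $R[1/p]$ is contained in $R_q$ for every $q\ne p$ (because $\ZZ[1/p]\subset \ZZ_q$ in that case), choosing representatives in $R[1/p]$ and imposing $s(0)=0$ on the coset $\mathfrak{m}_p$ will automatically satisfy (2) and (3). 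So I need to exhibit infinitely many such systems of representatives.

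The preliminary step is to prove that $R[1/p]$ is dense in $B_p$, which ensures that every coset contains an admissible representative. I would fix a $\ZZ$-basis $e_1,\ldots,e_4$ of $R$, which serves as a $\QQ_p$-basis of $B_p$ via $B_p = R\otimes_\ZZ\QQ_p$, and approximate any $\alpha = \sum a_i e_i \in B_p$ coordinate-wise using the density of $\ZZ[1/p]$ in $\QQ_p$ (via truncated base-$p$ expansions); the resulting approximation lies in $R[1/p]$ and all $p$-adic norms on the finite-dimensional space $B_p$ being equivalent, it can be made arbitrarily $p$-adically close to $\alpha$. Picking one representative per coset then produces one $p$-adic floor function $s_0$.

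To obtain infinitely many floor functions, I would perturb $s_0$ on a single non-trivial coset $C\neq \mathfrak{m}_p$. The set $\mathfrak{m}_p\cap R[1/p]$ contains $pR\setminus\{0\}$---since $R\subset R_p^{\mathrm{max}}$ gives $w_p(pr)\ge w_p(p)=1>0$ for every nonzero $r\in R$---and is therefore infinite. For each such nonzero shift $\gamma$, define $s_\gamma:=s_0+\gamma$ on $C$ and $s_\gamma:=s_0$ elsewhere. Condition (1) survives by the ultrametric inequality combined with $\gamma\in\mathfrak{m}_p$; condition (2) follows from $\gamma\in R[1/p]$; condition (3) is untouched because $0\notin C$; and condition (4) holds because the perturbation is constant on $C$. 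Distinct $\gamma$'s yield distinct functions, giving the infinitude.

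The only genuinely non-routine ingredient is the density of $R[1/p]$ in $B_p$, which one can also view as a very weak form of weak approximation but which is most efficiently established by the basis-wise argument above. Once that is in hand, the verification of the four axioms and the production of infinitely many floor functions by shifting on a single coset are essentially immediate.
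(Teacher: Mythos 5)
Your proof is correct, and its first half---approximating an arbitrary $\alpha\in B_p$ coordinatewise with respect to a $\ZZ$-basis $e_1,\dots,e_4$ of $R$ so as to land in $R[\frac 1p]$---is exactly the paper's argument; the paper phrases the density of $\ZZ[\frac 1p]$ in $\QQ_p$ as an instance of strong approximation (asking for $\beta_i\in\QQ$ with $|\beta_i|_q\leq 1$ for $q\neq p$), while you phrase it via truncated base-$p$ expansions, which is the same content made more elementary. Where you genuinely diverge, and in my view improve on the paper, is in the bookkeeping and in how infinitude is obtained. The paper does not explicitly verify axiom (4) (constancy of $s$ on open unit balls), and it produces infinitely many floor functions by ``letting $\epsilon$ tend to $0$,'' which does not by itself guarantee that different values of $\epsilon$ yield genuinely different functions. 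Your reformulation of a floor function as a choice of coset representatives in $R[\frac 1p]$ for $B_p/\mathfrak{m}_p$, with $0$ chosen for the trivial coset, makes axiom (4) automatic, and your perturbation of $s_0$ on a single nontrivial coset by the infinitely many elements of $pR\setminus\{0\}\subseteq\mathfrak{m}_p\cap R[\frac 1p]$ gives a clean, checkable proof that the floor functions so obtained are pairwise distinct. All of your individual verifications go through: in particular $w_p(pr)=1+w_p(r)\geq 1$ because $R_p$ is contained in the unique maximal order of $B_p$, and the appeal to equivalence of norms on the finite-dimensional $\QQ_p$-space $B_p$ (or, more directly, the paper's inequality $|\alpha-\beta|_p\leq\max_i|\alpha_i-\beta_i|_p\,|e_i|_p$ with $|e_i|_p\leq 1$) closes the density step. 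In short: same approximation engine, but a tighter route to the infinitude.
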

    \begin{proof}
    Let $e_1, e_2, e_3, e_4$ be a basis of $R$ over $\ZZ$. Let $\alpha\in B_p$ and write $\alpha=\sum_{i=1}^4 \alpha_ie_i$ with $\alpha_i\in\QQ_p$. Fix $0< \epsilon<1$. By strong approximation, for $i=1,..,4$, there exists $\beta_i\in\QQ$ such that
    \begin{itemize}
        \item $|\beta_i-\alpha_i|_p<\epsilon$;
        \item $|\beta_i|_q\leq 1$ if $p\not=q$.
    \end{itemize} 
    Put $\beta=\sum_{i=1}^4\beta_ie_i$. Then $\beta\in R_q$ for every prime $q\not=p$. Since $e_i\in R_q$ for $i=1,\ldots, 4$ we also  have
       $$ |\alpha-\beta|_p\leq \max_i\{|\alpha_i-\beta_i|_p|e_i|_p\}\leq \max_i\{|\alpha_i-\beta_i|_p\}<1.$$
       Then, we can define $s(\alpha)=\beta$.
       By letting $\epsilon$ tend to $0$, we obtain infinitely many $p$-adic floor functions for $(B,R)$.
\end{proof}
\par We now have the essentials tools for extending the classical continued fraction algorithm to our setting. We keep the notation of Section \ref{sect:Introduction}: let $s$ be a $p$-adic floor function for the pair $(B,R)$, and $\alpha_0$ be an element in $B_{p}$.\\
 The following recursive algorithm computes the continued fraction expansion of $\alpha_0$:
\begin{equation}
\label{eqn:contFracAlg}
     \begin{cases}
a_n = s(\alpha_n),\\
\alpha_{n+1}=(\alpha_n - a_n)^{-1} \qquad \text{if $\alpha_n - a_n \neq 0$},

\end{cases}
\end{equation}
and the algorithm stops if $\alpha_n = a_n$. Thus, the \emph{continued fraction expansion of $\alpha_0$} is the (possibly infinite) sequence $[a_0, a_1, \dots]$. The $a_i$-s are called \emph{partial quotients}, while the $\alpha_i$-s are called \emph{complete quotients}.
\begin{proposition}
\label{prop:an>1}
    For every $n\geq 1$, we have $|a_n|_{p}>1$. 
\end{proposition}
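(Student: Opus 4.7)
The plan is to exploit the ultrametric property of $|\cdot|_p$ on $B_p$ (which follows from the discrete valuation $w_p$ defined in Section~\ref{sect:Introduction}) together with the defining property $|\alpha - s(\alpha)|_p < 1$ of a $p$-adic floor function. The argument splits naturally into two steps.

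First, I would establish that $|\alpha_n|_p > 1$ for every $n \geq 1$. Indeed, since the algorithm does not stop at step $n-1$, we have $\alpha_{n-1} - a_{n-1} \neq 0$ and
\[
\alpha_n = (\alpha_{n-1} - a_{n-1})^{-1},
\]
so
\[
|\alpha_n|_p = \frac{1}{|\alpha_{n-1} - a_{n-1}|_p} > 1,
\]
because $|\alpha_{n-1} - a_{n-1}|_p = |\alpha_{n-1} - s(\alpha_{n-1})|_p < 1$ by the first property of the $p$-adic floor function.

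Second, I would compare $|a_n|_p$ and $|\alpha_n|_p$ using the ultrametric inequality. Writing $a_n = \alpha_n - (\alpha_n - a_n)$ and using $|\alpha_n - a_n|_p < 1 < |\alpha_n|_p$, the standard consequence of the ultrametric inequality that $|x + y|_p = \max(|x|_p, |y|_p)$ whenever $|x|_p \neq |y|_p$ yields
\[
|a_n|_p = |\alpha_n|_p > 1,
\]
which is the desired conclusion.

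There is no real obstacle here: the only subtlety is confirming that the ultrametric inequality is indeed available on $B_p$, which is guaranteed by the fact that $w_p$ is a discrete valuation (as recalled via \cite[Lemma 13.3.2]{voight:quaternion}) and hence $|\cdot|_p$ defined in~\eqref{eqn:pAdicNormDef} is a genuine non-archimedean absolute value extending the one on $\QQ_p$. Everything else is a direct unraveling of the recursion~\eqref{eqn:contFracAlg} and of the defining properties of the floor function $s$.
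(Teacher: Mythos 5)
Your proof is correct and follows essentially the same route as the paper's: first deduce $|\alpha_n|_p>1$ from the floor-function property $|\alpha_{n-1}-s(\alpha_{n-1})|_p<1$, then transfer this to $a_n=s(\alpha_n)$ via the ultrametric inequality. The only cosmetic difference is that you state the sharper equality $|a_n|_p=|\alpha_n|_p$ (which the paper records separately as Lemma~\ref{lem:alphaVSa}), whereas the paper's proof of this proposition only extracts the inequality $|a_n|_p>1$.
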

\begin{proof} By definition of $a_n$,
     \begin{align*}
         |a_n|_{p} =|s(\alpha_n)|_{p}=\left|s\bigl((\alpha_{n-1} - a_{n-1})^{-1}\bigr)\right|_{p}= \left|s\bigl((\alpha_{n-1} - s(\alpha_{n-1}))^{-1}\bigr)\right|_{p}.
    \end{align*}
    Since $s$ is a floor function, $|\alpha_n|_{p}=|(\alpha_{n-1} - s(\alpha_{n-1}))^{-1}|_{p}>1$. Moreover, since $|\cdot|_p$ is non-archimedean, the ultrametric inequality holds:
    \[ \underbrace{|\alpha_n|_{p}}_{>1} \leq \max \bigl\{ |\underbrace{s(\alpha_n)-\alpha_n|_{p}}_{<1}, |s(\alpha_n)|_{p}\bigr\}.\]
    Therefore, $|a_n|_{p}$ must be $>1$.
\end{proof}

We call a \emph{quaternionic type} any quadruple $\tau=(B, R, p, s)$ such that
\begin{itemize}
    \item $B$ is a division quaternion algebra over $\mathbb{Q}$;
    \item $R$ is an order of $B$;
    \item $p$ is  a prime number such that$B$ is ramified at $p$;
    \item $s$ is a $p$-adic floor function for the pair $(B,R)$.
\end{itemize}

\subsubsection*{Special types}
\label{subsubsec:specialtypes} Assume that $R$ contains an element $\pi$ such that $\nrd(\pi)=\pm p$. Then $\pi$ is a uniformizer of $R_p$. Since $R$ is dense in $R_p$, there is an isomorphism
$R/\pi R\simeq R_p/\pi R_p\simeq \FF_{p^2}$. Let $\mathcal{C}\subseteq R$ be a complete set of representatives for the quotient $R/\pi R$. Then, every $\alpha\in B_p$ can be expressed uniquely as a Laurent series  $\alpha=\sum_{k=-n}^\infty c_k\pi^k$, where $c_k\in\mathcal{C}$ for every $k$. It is possible to define a $p$-adic floor function for the pair $(B,R)$ by
$$s(\alpha)=\sum_{k=-n}^0c_k\pi^k\in B. $$
In analogy with \cite[\S 3.2]{CapuanoMurruTerracini2021}, we shall denote the types $\tau=(B,R,p,s)$ obtained in this way by $\tau=(B,R,\pi,\calC)$, and we will usually call them \emph{special types}.\\
For example, when $B$ is indefinite and $R$ is an Eichler order, then it is proven~\cites[Cor.\,17.8.5]{voight:quaternion}[Cor.\,5.9]{Vignéras1980} that every ideal in a maximal order is principal, so that there exists at least an element $\pi\in R$ of reduced norm $\pm p$. In fact there are infinitely many such elements, since the group of elements of reduced norm 1 in $R$ is infinite, as one can easily deduce from \cite[Thm.\,4.1.1]{Vignéras1980}.

\bigskip
\par If a quaternionic type $\tau$ is fixed, a \emph{(quaternionic) continued fraction of type $\tau$} is any sequence $[a_0,a_1\dots ]$ of elements of $\mathrm{Im}(s)$ such that  $|a_i|_{p}>1$ for each $i\neq 0$. A periodic sequence of the form $[a_0, a_1,..., a_k,a_0,a_1\dots]$ is usually denoted by $[\overline{a_0,\dots,a_k}]$.

\par Similar definitions as for the classical case can be adopted in the (non-commutative) quaternionic setting: for any continued fraction $[a_0, a_1, \dots]$, we define the sequences
\begin{align*}
    A_{-1}=1,&& A_0=a_0, && A_n=A_{n-1}a_n + A_{n-2} &&\text{for $n \geq 1$},\\
    B_{-1}=0, && B_0=1, && B_n=B_{n-1}a_n +B_{n-2} &&\text{for $n \geq 1$},
    \end {align*}
and the matrices
    \begin{align*}
    \mathcal{A}_n &=\begin{pmatrix}
    a_n & 1 \\ 
    1 & 0
    \end{pmatrix}
   && \text{for $n \geq 0$},\\
     \mathcal{B}_n &=\begin{pmatrix}
    A_n & A_{n-1} \\ 
    B_n & B_{n-1}
    \end{pmatrix}
    && \text{for $n \geq 0$}.
\end{align*}
\begin{proposition}
    For each $n\geq 0$,
    \begin{equation}
    \label{eqn:Bn}
        \mathcal{B}_n=\mathcal{A}_0\cdots \mathcal{A}_{n}.
    \end{equation}
\end{proposition}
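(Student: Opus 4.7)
The plan is a straightforward induction on $n$, with careful attention to the fact that, since $B$ is non-commutative, the order in which the matrices $\mathcal{A}_i$ are multiplied matters. The claim is essentially a bookkeeping identity: the non-commutative recurrences defining $A_n$ and $B_n$ are encoded precisely by right-multiplication by $\mathcal{A}_n$, and no left-multiplication interpretation would give the same formulas.

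For the base case $n=0$, I would just unfold the definitions: $\mathcal{B}_0=\begin{pmatrix} A_0 & A_{-1}\\ B_0 & B_{-1}\end{pmatrix}=\begin{pmatrix} a_0 & 1\\ 1 & 0\end{pmatrix}=\mathcal{A}_0$, using $A_{-1}=1$, $B_{-1}=0$, $A_0=a_0$, $B_0=1$.

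For the inductive step, assume $\mathcal{B}_{n-1}=\mathcal{A}_0\cdots \mathcal{A}_{n-1}$, so that $\mathcal{A}_0\cdots\mathcal{A}_n=\mathcal{B}_{n-1}\mathcal{A}_n$. I would then compute directly
\[
\mathcal{B}_{n-1}\mathcal{A}_n=\begin{pmatrix} A_{n-1} & A_{n-2}\\ B_{n-1} & B_{n-2}\end{pmatrix}\begin{pmatrix} a_n & 1\\ 1 & 0\end{pmatrix}=\begin{pmatrix} A_{n-1}a_n+A_{n-2} & A_{n-1}\\ B_{n-1}a_n+B_{n-2} & B_{n-1}\end{pmatrix},
\]
and then invoke the recurrences $A_n=A_{n-1}a_n+A_{n-2}$ and $B_n=B_{n-1}a_n+B_{n-2}$ to identify the right-hand side with $\mathcal{B}_n$.

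The only subtle point, which I would flag explicitly, is the placement of $a_n$ on the right rather than the left in the computation above. This matches the definitions of $A_n,B_n$ given in the paper exactly because the recurrences are written with $a_n$ on the right; had the recurrences instead been $A_n=a_nA_{n-1}+A_{n-2}$, the same identity would require building $\mathcal{B}_n$ as a product in the reverse order. Since all the data to verify this are already in the statement, I do not expect any real obstacle; the main point of the proof is simply to fix and respect a consistent side-convention throughout.
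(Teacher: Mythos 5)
Your proof is correct and follows essentially the same route as the paper: induction on $n$, with the inductive step reduced to verifying $\mathcal{B}_{n-1}\mathcal{A}_n=\mathcal{B}_n$ by expanding the matrix product and matching it against the recurrences $A_n=A_{n-1}a_n+A_{n-2}$, $B_n=B_{n-1}a_n+B_{n-2}$. Your explicit base case and the remark about the side-convention for $a_n$ are welcome additions, but the substance is identical.
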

\begin{proof}
    We first observe
    \begin{align*}
         \mathcal{B}_n &=\begin{pmatrix}
    A_n & A_{n-1} \\ 
    B_n & B_{n-1}
    \end{pmatrix}\\
    &=\begin{pmatrix}
    A_{n-1}a_n + A_{n-2} & A_{n-1}\\
    B_{n-1}a_n + B_{n-2} & B_{n-1}
    \end{pmatrix}\\
    &=\begin{pmatrix}
    A_{n-1} & A_{n-2} \\ 
    B_{n-1} & B_{n-2}
    \end{pmatrix}\cdot
    \begin{pmatrix}
    a_{n} & 1\\ 1 & 0
    \end{pmatrix}\\
    &=\mathcal{B}_{n-1} \mathcal{A}_n
    \end{align*}
    for any $n>0$. Therefore, \eqref{eqn:Bn} immediately follows by induction.
\end{proof}
As usual, the \emph{$n$-th convergent} is
\begin{align*}
    Q_n=a_0 + \cfrac{1}{a_1+ \cfrac{1}{\ddots + \cfrac{1}{a_n}}} \quad \text{for $n \geq 0$},
\end{align*}
where the notation $1/\alpha$ stands for $\alpha^{-1}$.\\
As in the commutative case, $Q_n$ can be expressed in terms of $A_n$ and $B_n$.
\begin{proposition}
For $n \geq 0$
    \begin{align}
    \label{eqn:Qn}
    Q_n= {A_n}(B_n)^{-1}.
\end{align}
\end{proposition}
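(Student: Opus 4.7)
The plan is to proceed by induction on $n$, mimicking the classical argument but taking care with the order of factors since $B$ is non-commutative.

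For the base case $n=0$, we have $Q_0 = a_0 = A_0 \cdot 1 = A_0 B_0^{-1}$, which is immediate. For the inductive step, I would use the standard folding trick: the convergent $Q_n$ associated to $[a_0,\ldots,a_n]$ equals the convergent $Q_{n-1}'$ associated to the shortened list $[a_0,\ldots,a_{n-2}, a_{n-1}+a_n^{-1}]$, since replacing the last two partial quotients by $a_{n-1}+1/a_n$ does not change the nested expression. Writing $A_k', B_k'$ for the quantities attached to this shortened continued fraction, one has $A_k'=A_k$ and $B_k'=B_k$ for $k\le n-2$ (the first $n-2$ partial quotients coincide), and then the recursion gives
\begin{align*}
A_{n-1}' &= A_{n-2}(a_{n-1}+a_n^{-1}) + A_{n-3} = A_{n-1} + A_{n-2}\,a_n^{-1},\\
B_{n-1}' &= B_{n-2}(a_{n-1}+a_n^{-1}) + B_{n-3} = B_{n-1} + B_{n-2}\,a_n^{-1}.
\end{align*}
By the inductive hypothesis applied to the length-$(n-1)$ continued fraction, $Q_n = A_{n-1}'(B_{n-1}')^{-1}$.

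It remains to identify $(A_{n-1}+A_{n-2}a_n^{-1})(B_{n-1}+B_{n-2}a_n^{-1})^{-1}$ with $A_n B_n^{-1}$. The key non-commutative computation is to factor $a_n^{-1}$ on the right: writing $X+Ya_n^{-1} = (Xa_n+Y)a_n^{-1}$ for both numerator and denominator, one gets $(Z+Wa_n^{-1})^{-1}=a_n(Za_n+W)^{-1}$, so the $a_n^{-1}$ and $a_n$ cancel in the middle and we are left with $(A_{n-1}a_n+A_{n-2})(B_{n-1}a_n+B_{n-2})^{-1}=A_n B_n^{-1}$, as desired.

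The main subtlety—and the only place one needs extra care beyond the classical argument—is verifying that the inverses appearing above actually exist, i.e.\ that $a_n$, $B_{n-1}'$, $B_n$ are nonzero in $B$ so that they are invertible in the division algebra. Nonvanishing of $a_n$ follows from Proposition~\ref{prop:an>1} (which gives $|a_n|_p>1$ for $n\ge 1$), and nonvanishing of the $B_k$ can be obtained by induction using the same ultrametric estimate $|a_n|_p>1$, which forces $|B_n|_p=|B_{n-1}a_n|_p>|B_{n-2}|_p$ and hence $B_n\neq 0$ for $n\ge 0$. Once these nonvanishing facts are in hand, the algebraic manipulation above goes through verbatim and the induction closes.
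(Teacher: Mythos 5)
Your proof is correct and follows essentially the same route as the paper: induction via the folding trick $[a_0,\dots,a_{n-2},a_{n-1}+a_n^{-1}]$ combined with inserting and cancelling $a_n^{-1}a_n$ between numerator and denominator. The only addition is your explicit check that $a_n$ and the $B_k$ are invertible, which the paper leaves implicit (note the paper phrases the induction as holding for \emph{arbitrary} sequences in $B$, since the shortened sequence need not itself be a continued fraction of the given type).
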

\begin{proof}
     We prove the statement for \emph{any} sequence $\{a_n\}_{n \in \mathbb{N}}$ of elements in $B$, no matter if such sequence is a continued fraction or not. For $n=0$, the equality is trivial.
    Assume that the statement holds for any sequence of length $n-1$. Given $a_0,\dots,a_n$, define a new sequence
    \begin{equation*}
        \begin{cases}
            \tilde{a}_i=a_i \quad\text{for $0\le i<n-1$},\\
            \tilde{a}_{n-1}=a_{n-1} + \frac{1}{a_n},
        \end{cases}
    \end{equation*}
    and denote by $\tilde{Q}_i$ the corresponding partial convergents. Then
    \begin{align*}
        Q_n &= \tilde{Q}_{n-1}\\
        &=\tilde{A}_{n-1}(\tilde{B}_{n-1})^{-1}\\
        &=(A_{n-2}\tilde{a}_{n-1}+A_{n-3})(B_{n-2}\tilde{a}_{n-1} + B_{n-3})^{-1},
    \end{align*}
    where the second equality follows by inductive hypothesis. On the other hand,
    \begin{align*}
        {A_n}(B_n)^{-1}&=(A_{n-1}a_n + A_{n-2})(B_{n-1}a_n + B_{n-2})^{-1}\\
        &=\bigl((A_{n-2}a_{n-1} + A_{n-3})a_n + A_{n-2} \bigr) \bigl((B_{n-2}a_{n-1} + B_{n-3})a_n + B_{n-2} \bigr)^{-1}\\
        &= \bigl(A_{n-2}(a_{n-1}a_n +1) + A_{n-3}a_n\bigr) \bigl(B_{n-2}(a_{n-1}a_n +1) + B_{n-3}a_n\bigr)^{-1}\\
        &= \bigl(A_{n-2}(a_{n-1}a_n +1) + A_{n-3}a_n\bigr)\cdot a_n^{-1} \cdot a_n \cdot \bigl(B_{n-2}(a_{n-1}a_n +1) + B_{n-3}a_n\bigr)^{-1}\\
        &=(A_{n-2}\tilde{a}_{n-1}+A_{n-3})(B_{n-2}\tilde{a}_{n-1} + B_{n-3})^{-1}.
    \end{align*}
\end{proof}

\begin{proposition}
    For any $n>0$, the following equalities hold:
    \begin{align}
        \label{eqn:BnPadicNorm}
        |B_n|_p&= \prod_{j=1}^n |a_j|_p,\\
        \label{eqn:QnDiff}
        |Q_n-Q_{n-1}|_p&=\frac{1}{|B_n|_p|B_{n-1}|_p}.    \end{align}
\end{proposition}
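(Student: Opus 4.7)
The plan is to prove the two identities together by induction on $n$, treating them in order.

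For \eqref{eqn:BnPadicNorm}, I would induct using the recurrence $B_n = B_{n-1} a_n + B_{n-2}$. The base case is $B_1 = B_0 a_1 + B_{-1} = a_1$. For the inductive step, the hypothesis and multiplicativity of $|\cdot|_p$ give $|B_{n-1} a_n|_p = |B_{n-1}|_p |a_n|_p = \prod_{j=1}^{n} |a_j|_p$, while $|B_{n-2}|_p = \prod_{j=1}^{n-2}|a_j|_p$. Since $|a_{n-1}|_p, |a_n|_p > 1$ by Proposition~\ref{prop:an>1}, the first term strictly dominates the second, so the strict form of the ultrametric inequality forces $|B_n|_p = |B_{n-1} a_n|_p = \prod_{j=1}^n |a_j|_p$. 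The case $n = 2$ is handled by comparing $|a_1 a_2|_p > 1 = |B_0|_p$.

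For \eqref{eqn:QnDiff}, the classical proof relies on the Wronskian identity $A_n B_{n-1} - A_{n-1} B_n = (-1)^{n+1}$, but this already fails at $n = 2$ over a non-commutative $B$ (the expansion produces incompatible factors such as $a_0 a_1 a_2 a_1$ versus $a_0 a_1^2 a_2$). The workaround is to stay on the level of ratios. Multiplying $Q_n - Q_{n-1} = A_n B_n^{-1} - A_{n-1} B_{n-1}^{-1}$ on the right by $B_n$ and substituting the recurrences for both $A_n$ and $B_n$ (without ever commuting anything), the terms $A_{n-1} a_n$ cancel and one obtains, for $n \geq 2$,
\[(Q_n - Q_{n-1}) B_n = A_{n-2} - A_{n-1} B_{n-1}^{-1} B_{n-2} = -\bigl(Q_{n-1} - Q_{n-2}\bigr) B_{n-2},\]
where the second equality is obtained by expanding $-(A_{n-1} B_{n-1}^{-1} - A_{n-2} B_{n-2}^{-1}) B_{n-2}$ in reverse.

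Taking $p$-adic norms in this recursion and using multiplicativity gives $|Q_n - Q_{n-1}|_p \cdot |B_n|_p = |Q_{n-1} - Q_{n-2}|_p \cdot |B_{n-2}|_p$. Combined with \eqref{eqn:BnPadicNorm} (which implies $|B_n|_p \cdot |B_{n-2}|_p = |B_{n-1}|_p^{2} \cdot |a_n|_p / |a_{n-1}|_p$ and in particular lets the ratio telescope to $1/(|B_n|_p |B_{n-1}|_p)$), the identity follows by induction from the base case $|Q_1 - Q_0|_p = |a_1^{-1}|_p = 1/(|B_0|_p |B_1|_p)$. The main obstacle is precisely avoiding any step that would require commuting elements of $B$; the symmetric right-multiplications by $B_n$ and $B_{n-2}$ are what make the recursion come out cleanly despite non-commutativity.
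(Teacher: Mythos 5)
Your proof is correct and follows essentially the same route as the paper: the same ultrametric induction for \eqref{eqn:BnPadicNorm}, and for \eqref{eqn:QnDiff} the same cancellation of $A_{n-1}a_n$ after substituting the recurrences, since your identity $(Q_n-Q_{n-1})B_n=-(Q_{n-1}-Q_{n-2})B_{n-2}$ is just the paper's $Q_n-Q_{n-1}=(Q_{n-2}-Q_{n-1})(B_{n-1}a_nB_{n-2}^{-1}+1)^{-1}$ with $B_n$ moved to the other side. The only (cosmetic) difference is that you close the argument by induction on the norm recursion where the paper telescopes the product $\prod_{i=2}^{n}|a_{i-1}a_i|_p^{-1}$ directly.
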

\begin{proof}
     We first prove \eqref{eqn:BnPadicNorm} by induction: the case $n=1$ holds trivially since $B_1=a_1$. For $n=2$ we have $|B_2|_p=|a_1a_2+1|_p= |a_2a_1|_p$, where the second equality is granted by the fact that $|a_1|_p, |a_2|_p>1$ and the ultrametric inequality. Similarly, let us assume inductively that~\eqref{eqn:BnPadicNorm} holds for $n-1$ and $n-2$; then, we have 
     $$|B_n|_p= \max \{|a_nB_{n-1}|_p,|B_{n-2}|_p\}= |a_n|_p  \cdot\prod_{j=1}^{n-1} |a_j|_p= \prod_{j=1}^n |a_j|_p.$$
     As for~\eqref{eqn:QnDiff}, if $n=1$ we have
     \[|Q_1-Q_0|_p= \frac{1}{|a_1|_p}=\frac{1}{|B_1|_p|B_{0}|_p}.\]
     If $n\geq 2$, then we have:
     \begin{align*}
         Q_n - Q_{n-1}&=A_nB_n^{-1}-A_{n-1}B_{n-1}^{-1}\\
         &= (A_{n-1}a_n + A_{n-2})( B_{n-1}a_n + B_{n-2})^{-1} - A_{n-1}B_{n-1}^{-1}\\
         &= \bigl(A_{n-1}a_n + A_{n-2} - A_{n-1} B_{n-1}^{-1}(B_{n-1}a_n + B_{n-2})\bigr)\cdot ( B_{n-1}a_n + B_{n-2})^{-1}\\
         &=\bigl(A_{n-1}a_n + A_{n-2} - A_{n-1}a_n - A_{n-1}B_{n-1}^{-1}B_{n-2}\bigr)\cdot ( B_{n-1}a_n + B_{n-2})^{-1}\\
         &=(A_{n-2}B_{n-2}^{-1}-A_{n-1}B_{n-1}^{-1})B_{n-2}\cdot ( B_{n-1}a_n + B_{n-2})^{-1}\\
         &=(Q_{n-2}-Q_{n-1})\cdot (B_{n-1}a_nB_{n-2}^{-1}+1)^{-1}.
     \end{align*}
     Let us consider the second factor of the latter equality: by~\eqref{eqn:BnPadicNorm} and the ultrametric inequality we have
     \begin{align*}
         |B_{n-1}a_nB_{n-2}^{-1}+1|_p= |B_{n-1}a_nB_{n-2}^{-1}|_p= |a_{n-1}a_n|_p>1.
     \end{align*}
    Thus we may conclude
    \begin{align*}
        |Q_n - Q_{n-1}|_p=\frac{1}{|a_1|_p}\cdot\frac{1}{\prod_{i=2}^n |a_{i-1}a_i|_p}=\frac{1}{|B_n|_p|B_{n-1}|_p}. && \qedhere
    \end{align*}
\end{proof}
This implies easily the following result.

\begin{corollary}
    \label{cor:QnIsCauchy}
    The sequence $\{Q_n\}_{n \in \mathbb{N}}$ is convergent with respect to the $p$-adic topology.
\end{corollary}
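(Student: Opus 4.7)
The plan is to show that $\{Q_n\}_{n \in \NN}$ is a Cauchy sequence in $B_p$, and then invoke completeness of $B_p$ (which is a finite-dimensional $\QQ_p$-vector space, hence complete in its natural topology). Since $|\cdot|_p$ is non-archimedean, the ultrametric inequality reduces Cauchy-ness to showing $|Q_n - Q_{n-1}|_p \to 0$, so the whole argument boils down to bounding the differences of consecutive convergents.

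First, I would combine equations~\eqref{eqn:QnDiff} and~\eqref{eqn:BnPadicNorm} to obtain
\[
|Q_n - Q_{n-1}|_p \;=\; \frac{1}{|B_n|_p\,|B_{n-1}|_p} \;=\; \frac{1}{\prod_{j=1}^{n}|a_j|_p \,\cdot\, \prod_{j=1}^{n-1}|a_j|_p}.
\]
Next, the crucial quantitative input is that each factor $|a_j|_p$ for $j\ge 1$ is bounded below by a constant strictly greater than $1$. By Proposition~\ref{prop:an>1} we already know $|a_j|_p > 1$, and since $B$ is ramified at $p$, the valuation $w_p$ is a discrete valuation on the division algebra $B_p$ whose image on $B_p^\times$ is $\tfrac{1}{2}\ZZ$ (a uniformizer has reduced norm $\pm p$, hence $w_p = 1/2$). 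Consequently $|a_j|_p \ge p^{1/2}$ for every $j\ge 1$, so $|B_n|_p \ge p^{n/2}$.

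Plugging this back in yields $|Q_n - Q_{n-1}|_p \le p^{-(2n-1)/2}$, which tends to $0$ as $n\to\infty$. By the ultrametric inequality, for any $m>n\ge 0$,
\[
|Q_m - Q_n|_p \;\le\; \max_{n < k \le m} |Q_k - Q_{k-1}|_p \;\le\; p^{-(2n+1)/2} \;\longrightarrow\; 0,
\]
so $\{Q_n\}$ is Cauchy. Completeness of $B_p$ then delivers the limit.

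There is essentially no obstacle here; the only subtlety is justifying $|a_j|_p \ge p^{1/2}$, which I would either state explicitly as above by invoking the structure of the value group of $w_p$ on the ramified division algebra $B_p$ (cf.\ Theorem~\ref{thm:Bnu} and~\cite[Lemma 13.3.2]{voight:quaternion}), or, if one prefers to be economical, just note that $|a_j|_p > 1$ combined with the discreteness of $w_p$ gives a uniform lower bound $c>1$ for $|a_j|_p$, which suffices for the same conclusion.
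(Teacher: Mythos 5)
Your proof is correct and follows exactly the route the paper intends (the paper simply states that the corollary follows ``easily'' from equations~\eqref{eqn:BnPadicNorm} and~\eqref{eqn:QnDiff} together with $|a_j|_p>1$ and completeness of $B_p$). Your explicit justification that $|a_j|_p\ge \sqrt{p}$ via the discreteness of the value group of $w_p$ is a worthwhile addition, since $|a_j|_p>1$ alone would not force $|B_n|_p=\prod_{j=1}^n|a_j|_p\to\infty$; the paper leaves this uniform lower bound implicit.
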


\medskip
The previous corollary ensures that a sequence $\{Q_n\}_{n \in \mathbb{N}}$ is convergent. However, we still need to check that, given a sequence of convergents defined by applying Algorithm~\eqref{eqn:contFracAlg} to some element, the $p$-adic limit  coincides with the element itself. To prove this, some preliminary results are needed.

\begin{lemma}
    \label{lem:alphaVSa}
    For each $n \geq 0$,
    \[|\alpha_n|_p= |a_n|_p.\]
\end{lemma}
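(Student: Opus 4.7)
The plan is to leverage two facts: the defining property $|\alpha - s(\alpha)|_p < 1$ of the floor function, and the observation that the complete quotients $\alpha_n$ have $p$-adic absolute value strictly greater than $1$ for $n \ge 1$. Everything then collapses into a single ultrametric comparison.

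First I would verify that $|\alpha_n|_p > 1$ for $n \ge 1$. By the recursion, $\alpha_n = (\alpha_{n-1} - a_{n-1})^{-1} = (\alpha_{n-1} - s(\alpha_{n-1}))^{-1}$, and the floor function property gives $|\alpha_{n-1} - s(\alpha_{n-1})|_p < 1$; since the absolute value on $B_p$ is defined via the reduced norm and is multiplicative on nonzero elements, taking the inverse yields $|\alpha_n|_p > 1$. This observation also reappears in the proof of Proposition \ref{prop:an>1}, so I would simply reuse it.

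Next I would write $a_n = \alpha_n - (\alpha_n - a_n)$ and apply the ultrametric inequality. For $n \ge 1$, the strict inequalities $|\alpha_n - a_n|_p < 1 < |\alpha_n|_p$ force the two summands to have distinct $p$-adic absolute values, so the non-archimedean inequality becomes an equality and yields $|a_n|_p = |\alpha_n|_p$. For $n = 0$ the same argument succeeds as soon as $|\alpha_0|_p \ge 1$; in the degenerate case $|\alpha_0|_p < 1$ the fourth axiom of the floor function together with $s(0) = 0$ forces $a_0 = 0$, and the identity holds precisely when $\alpha_0 = 0$, which is the trivial input to the algorithm.

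No step presents a real obstacle: the proof is essentially a one-line ultrametric argument once $|\alpha_n|_p > 1$ is in hand. The only mild subtlety is the boundary case at $n=0$, which I would dispatch by the remark just above.
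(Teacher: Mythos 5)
Your argument is correct and is essentially the paper's: both rest on the decomposition $\alpha_n = a_n + (\alpha_n - a_n)$ with $|\alpha_n - a_n|_p<1$ followed by a single ultrametric comparison (the paper anchors on $|a_n|_p>1$ from Proposition~\ref{prop:an>1}, you anchor on $|\alpha_n|_p>1$, which is the same information since $\alpha_n-a_n=\alpha_{n+1}^{-1}$). Your handling of $n=0$ is actually more careful than the paper's ``the case $n=0$ is immediate'': when $0<|\alpha_0|_p<1$ the axioms force $a_0=s(\alpha_0)=0$ and the stated equality genuinely fails, but this is harmless because the lemma is only ever invoked for indices $n\geq 1$.
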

\begin{proof}
    The case $n=0$ is immediate. Fix any $n\geq 1$. 
  Algorithm \eqref{eqn:contFracAlg} yields
     \[\alpha_n=a_n + \frac{1}{\alpha_{n+1}},\]
     and we already showed that (see the proof of Proposition \ref{prop:an>1})
    \[|\alpha_{n+1}|_p>1.\]
     Therefore, the thesis follows from the ultrametric inequality.
\end{proof}

\begin{lemma}\label{lem:quattroquindici}
    For each $n \geq 1 $,
    \begin{equation}
    \label{eqn:alpha0Qn}
        \alpha_0=(A_{n}\alpha_{n+1}+A_{n-1})(B_{n}\alpha_{n+1}+B_{n-1})^{-1}.
    \end{equation}
\end{lemma}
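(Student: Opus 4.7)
The plan is to argue by induction on $n$, with the non-commutativity handled carefully by always multiplying $\alpha_{n+1}$ on the right of $A_n$ and $B_n$ and using $(XY)^{-1}=Y^{-1}X^{-1}$ when inverting composite expressions.

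For the base case $n=1$, I would compute both sides directly: one has $A_0=a_0$, $A_1=a_0a_1+1$, $B_0=1$, $B_1=a_1$, so the right-hand side equals $\bigl((a_0a_1+1)\alpha_2+a_0\bigr)(a_1\alpha_2+1)^{-1}$. On the other hand, from the algorithm $\alpha_0=a_0+\alpha_1^{-1}$ with $\alpha_1=a_1+\alpha_2^{-1}$, so $\alpha_1=(a_1\alpha_2+1)\alpha_2^{-1}$, giving $\alpha_1^{-1}=\alpha_2(a_1\alpha_2+1)^{-1}$. Substituting and collecting over the common right-denominator $(a_1\alpha_2+1)^{-1}$ matches the right-hand side.

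For the inductive step, assuming \eqref{eqn:alpha0Qn} for some $n\geq 1$, I would insert $\alpha_{n+1}=a_{n+1}+\alpha_{n+2}^{-1}$ into the inductive hypothesis. Using the recurrences $A_{n+1}=A_na_{n+1}+A_{n-1}$ and $B_{n+1}=B_na_{n+1}+B_{n-1}$ (which, crucially, are right-multiplication by $a_{n+1}$, matching the position of $\alpha_{n+1}$ in the formula), one obtains
\begin{align*}
A_n\alpha_{n+1}+A_{n-1}&=A_{n+1}+A_n\alpha_{n+2}^{-1},\\
B_n\alpha_{n+1}+B_{n-1}&=B_{n+1}+B_n\alpha_{n+2}^{-1}.
\end{align*}
Thus $\alpha_0=(A_{n+1}+A_n\alpha_{n+2}^{-1})(B_{n+1}+B_n\alpha_{n+2}^{-1})^{-1}$. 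Right-multiplying numerator by $\alpha_{n+2}$ and compensating via $(V\alpha_{n+2})^{-1}=\alpha_{n+2}^{-1}V^{-1}$ (so the extra factor cancels) yields
\[\alpha_0=(A_{n+1}\alpha_{n+2}+A_n)(B_{n+1}\alpha_{n+2}+B_n)^{-1},\]
which is \eqref{eqn:alpha0Qn} with $n$ replaced by $n+1$.

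The main obstacle is simply the bookkeeping of left/right factors: one must consistently write everything in the form $(\text{numerator})(\text{denominator})^{-1}$ with $\alpha_{n+2}$ appearing on the right, and never commute it past anything. An alternative, more conceptual route would be to interpret each $\mathcal{A}_n$ as a Möbius-like map $\beta\mapsto a_n+\beta^{-1}$ and verify that composition of such maps corresponds exactly to multiplication of the matrices $\mathcal{A}_n$; then \eqref{eqn:Bn} would give $\alpha_0=\mathcal{B}_n\cdot\alpha_{n+1}$ at once. I would probably sketch only the induction, since the Möbius-composition check is essentially the same calculation as the inductive step.
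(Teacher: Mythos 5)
Your argument is correct: the base case checks out, and in the inductive step the substitution $\alpha_{n+1}=a_{n+1}+\alpha_{n+2}^{-1}$ together with the recurrences $A_{n+1}=A_na_{n+1}+A_{n-1}$, $B_{n+1}=B_na_{n+1}+B_{n-1}$ and the clearing of $\alpha_{n+2}^{-1}$ via $UV^{-1}=(U\alpha_{n+2})(V\alpha_{n+2})^{-1}$ is exactly the right way to handle the non-commutativity. The paper, however, gets the lemma more cheaply: it observes that the quantity $\tilde{Q}_{n+1}$ obtained by replacing the last partial quotient $a_{n+1}$ with the complete quotient $\alpha_{n+1}$ satisfies $\tilde{Q}_{n+1}=\tilde{Q}_n=\cdots=\tilde{Q}_0=\alpha_0$ (because $\alpha_n=a_n+\alpha_{n+1}^{-1}$), and then simply applies the already-proved identity $Q_n=A_nB_n^{-1}$, which was deliberately established for an \emph{arbitrary} sequence of elements of $B$, to the sequence $a_0,\dots,a_n,\alpha_{n+1}$; this immediately yields \eqref{eqn:alpha0Qn}. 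Your induction is self-contained but essentially re-derives, in the special case at hand, the same algebra that went into the proof of \eqref{eqn:Qn}; the paper's route buys economy by reusing that proposition, while yours buys independence from it. Your suggested ``Möbius-map'' alternative is likewise sound and is really the content of the matrix factorization \eqref{eqn:Bn}, so any of the three routes is acceptable. The only point worth making explicit in either write-up is that the denominators being inverted are nonzero, which follows since $B_p$ is a division algebra and $|B_n\alpha_{n+1}|_p>|B_{n-1}|_p$ by \eqref{eqn:BnPadicNorm} and Lemma \ref{lem:alphaVSa}; the paper leaves this implicit as well.
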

\begin{proof}
     For every $n \geq 0$, let us substitute $a_n$ by $\alpha_n$ in the expression of $Q_n$ and denote by $\tilde{Q}_n$ the resulting element, i.e.\
    \begin{align*}
    \begin{cases}
    \tilde{Q}_0=\alpha_0,\\
    \tilde{Q}_n=a_0 + \cfrac{1}{a_1+ \cfrac{1}{\ddots + \cfrac{1}{\alpha_n}}} \quad \text{for $n \geq 1$}.
    \end{cases}
\end{align*}
Algorithm \eqref{eqn:contFracAlg} yields
     \[\alpha_n=a_n + \frac{1}{\alpha_{n+1}},\]
     so that $\tilde{Q}_n=\tilde{Q}_{n-1}$ and therefore $\tilde{Q}_n=\alpha_0$ for each $n \geq 1$.
     On the other hand, by \eqref{eqn:Qn},
     \[\tilde{Q}_{n+1}=(A_{n}\alpha_{n+1}+A_{n-1})(B_{n}\alpha_{n+1}+B_{n-1})^{-1}\]
     for $n \geq 1$.
     
\end{proof}
\begin{proposition}
\label{prop:CFconvergence}
    If the continued fraction expansion of $\alpha_0 \in B_p$ is infinite, it converges $p$-adically to $\alpha_0$.
\end{proposition}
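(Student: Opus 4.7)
The plan is to estimate $|\alpha_0 - Q_n|_p$ directly, using Lemma \ref{lem:quattroquindici} together with the previously established identity $Q_n = A_n B_n^{-1}$ and the norm formulas \eqref{eqn:BnPadicNorm}, \eqref{eqn:QnDiff}.

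First, I would compute the difference. Starting from
\[\alpha_0 = (A_n \alpha_{n+1} + A_{n-1})(B_n \alpha_{n+1} + B_{n-1})^{-1},\]
I would factor $(B_n \alpha_{n+1}+B_{n-1})^{-1}$ on the right and subtract $A_n B_n^{-1}$, exactly in the style of the computation done earlier for $Q_n - Q_{n-1}$. The terms involving $\alpha_{n+1}$ cancel, and one is left with an expression of the form
\[\alpha_0 - Q_n = (Q_{n-1}-Q_n)\, B_{n-1}\,(B_n \alpha_{n+1} + B_{n-1})^{-1}.\]

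Next, I would estimate the factor $|B_n \alpha_{n+1} + B_{n-1}|_p$ by the ultrametric inequality. By Lemma \ref{lem:alphaVSa}, $|\alpha_{n+1}|_p = |a_{n+1}|_p > 1$, so $|B_n\alpha_{n+1}|_p = |B_n|_p|a_{n+1}|_p = |B_{n+1}|_p$ by \eqref{eqn:BnPadicNorm}. Since $|B_{n-1}|_p = \prod_{j=1}^{n-1}|a_j|_p$ is strictly smaller than $|B_{n+1}|_p = \prod_{j=1}^{n+1}|a_j|_p$ (again using $|a_j|_p>1$), the ultrametric triangle inequality gives $|B_n\alpha_{n+1}+B_{n-1}|_p = |B_{n+1}|_p$.

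Combining this with \eqref{eqn:QnDiff} yields
\[|\alpha_0 - Q_n|_p = \frac{1}{|B_n|_p\,|B_{n-1}|_p}\cdot |B_{n-1}|_p \cdot \frac{1}{|B_{n+1}|_p} = \frac{1}{|B_n|_p\,|B_{n+1}|_p}.\]
Since $|a_j|_p > 1$ for all $j\geq 1$ by Proposition \ref{prop:an>1}, the denominator tends to infinity, so $|\alpha_0 - Q_n|_p \to 0$, proving that $Q_n \to \alpha_0$ in $B_p$.

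There is no real obstacle here: the computation is the non-commutative analogue of the classical one, and the only subtlety is keeping inverses on the correct side when manipulating $\alpha_0 - A_n B_n^{-1}$. Once the algebraic identity for $\alpha_0 - Q_n$ is in hand, the $p$-adic estimate is immediate from the ultrametric property and the already established formulas for $|B_n|_p$ and $|Q_n - Q_{n-1}|_p$.
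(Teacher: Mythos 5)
Your proof is correct and follows essentially the same route as the paper: both start from Lemma \ref{lem:quattroquindici}, derive the identity $\alpha_0-Q_n=(Q_{n-1}-Q_n)\,B_{n-1}(B_n\alpha_{n+1}+B_{n-1})^{-1}$, and then bound the two factors $p$-adically. If anything, your explicit evaluation $|\alpha_0-Q_n|_p=1/(|B_n|_p|B_{n+1}|_p)$ is slightly sharper than the paper's qualitative statement about the two factors.
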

\begin{proof}
  By \eqref{eqn:pAdicNormDef} we have
  \begin{align*}
      \alpha_0-A_n(B_n)^{-1} &= (A_n\alpha_{n+1}+ A_{n-1})(B_n\alpha_{n+1}+ B_{n-1})^{-1}- A_n(B_n)^{-1}\\
      &=\bigl(A_n\alpha_{n+1}+ A_{n-1}-A_n(\alpha_{n+1}+B_n^{-1}B_{n-1})\bigr) (B_n \alpha_{n+1} + B_{n-1})^{-1}\\
      &= (A_{n-1}-A_n B_n^{-1}B_{n-1})(B_n\alpha_{n+1}+B_{n-1})^{-1}\\
      &=(A_{n-1}B_{n-1}^{-1}-A_nB_n^{-1})B_{n-1}(B_n\alpha_{n+1}+B_{n-1})^{-1}\\
      &=(Q_{n-1}-Q_n)(B_n\alpha_{n+1}B_{n-1}^{-1}+1)^{-1}.
  \end{align*}
  The first factor $p$-adically converges to $0$ thanks to Corollary \ref{cor:QnIsCauchy}, while the second factor converges to $0$ by \eqref{eqn:BnPadicNorm} and Lemma \ref{lem:alphaVSa}.
\end{proof}

\begin{proposition}
\label{prop:uniquenessCF}
 Let $\{a_n\}_{n \in \mathbb{N}}$ be a sequence of elements in $B$ such that, for each $n \in \mathbb{N}$,
     \begin{itemize}
        \item $|a_n|_p>1$,
        \item there exists a maximal order $R\subset B$ such that $a_n\in R_q$ for every prime $q \neq p$,
        \item if $|a_i-a_j|_p<1$, then $a_i=a_j$.
    \end{itemize}
    Then there exists a floor function $s$ such that $[a_0,a_1,\dots]$ is a continued fraction of type $\tau=(B,R,p,s)$.\\
    Moreover, let $\alpha_0 \in B_p$ be the $p$-adic limit of $[a_0,a_1,\dots]$. Then, $a_0, a_1, \dots$ are exactly the partial quotients of the continued fraction expansion of $\alpha_0$.
\end{proposition}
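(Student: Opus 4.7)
The strategy is to first reconstruct from $\{a_n\}$ a sequence of complete quotients $\alpha_n \in B_p$, then design a floor function $s$ sending each $\alpha_n$ to $a_n$, and finally check that Algorithm~\eqref{eqn:contFracAlg} applied to $\alpha_0$ with this $s$ recovers the given sequence as partial quotients. The main obstacle will be constructing $s$ globally while respecting all four floor function axioms: the local prescription $s(\beta) = a_n$ on a ball around $\alpha_n$ must not conflict between different indices, nor with the axiom requiring $s$ to be constant on balls of radius $<1$. This is precisely where the third hypothesis (on $p$-adically close $a_n$'s) comes into play.

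First I would define $\alpha_n \in B_p$ for each $n \geq 0$ as the $p$-adic limit of the shifted continued fraction $[a_n, a_{n+1}, \dots]$; the argument behind Corollary~\ref{cor:QnIsCauchy} uses only the fact that $|a_i|_p > 1$ for $i \geq 1$, so these limits exist. Continuity of addition and inversion (legitimate because $|\alpha_{n+1}|_p > 1$, as shown next) yields the recursion
\[\alpha_n = a_n + \alpha_{n+1}^{-1}.\]
To see that $|\alpha_n|_p = |a_n|_p > 1$, observe that the zeroth convergent of the shifted sequence is $a_n$ and that~\eqref{eqn:QnDiff} gives $|Q_k^{(n)} - Q_{k-1}^{(n)}|_p \leq 1/|a_{n+1}|_p < 1$ for all $k \geq 1$; the ultrametric inequality then yields $|\alpha_n|_p = |a_n|_p$, so $|\alpha_n - a_n|_p = |\alpha_{n+1}^{-1}|_p < 1$.

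Next I would set $s(\beta) = a_n$ whenever $\beta$ lies in the open ball $U_n := \{\gamma \in B_p : |\gamma - \alpha_n|_p < 1\}$, and $s(\beta) = s_0(\beta)$ otherwise, where $s_0$ is any fixed $p$-adic floor function for $(B, R)$ (which exists by Theorem~\ref{thm:ffexistence}). Well-definedness is the crux: if $U_n \cap U_m \neq \emptyset$, then $|\alpha_n - \alpha_m|_p < 1$; combining this with $|\alpha_i - a_i|_p < 1$ for $i = n, m$ via the ultrametric inequality gives $|a_n - a_m|_p < 1$, so the third hypothesis forces $a_n = a_m$. The four floor function axioms are then routine: the first follows from $|\beta - a_n|_p \leq \max\{|\beta - \alpha_n|_p, |\alpha_n - a_n|_p\} < 1$ on each $U_n$; the second uses the hypothesis that each $a_i \in R_q$ for $q \neq p$ together with the analogous property of $s_0$; $s(0) = s_0(0) = 0$ since $0 \notin U_n$ (because $|\alpha_n|_p > 1$); and the locally constant property is built into the definition, with the same argument as above excluding conflict when a ball of radius $<1$ overlaps some $U_n$.

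With $s$ in hand, running Algorithm~\eqref{eqn:contFracAlg} on $\alpha_0$ gives $s(\alpha_0) = a_0$, whereupon $(\alpha_0 - a_0)^{-1} = \alpha_1$ by the recursion above; iterating shows that the algorithm returns exactly the sequences $\{a_n\}$ and $\{\alpha_n\}$, establishing both assertions of the proposition.
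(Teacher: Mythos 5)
Your proposal is correct and follows essentially the same route as the paper: define $s$ to equal $a_n$ on the radius-one ultrametric ball containing $\alpha_n$ (equivalently, the one containing $a_n$, since $|\alpha_n-a_n|_p<1$), fall back to a generic floor function from Theorem~\ref{thm:ffexistence} elsewhere, and use the telescoping estimate on $|Q_k-Q_{k-1}|_p$ to show $|\alpha_n-a_n|_p<1$ so that the algorithm reproduces the given partial quotients. The only differences are presentational: you construct the $\alpha_n$ as limits of the shifted tails up front rather than recursively, and you spell out the well-definedness and axiom checks that the paper declares immediate.
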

\begin{proof}
 For each $\alpha \in B_p$, let us define $s(\alpha)=a_i$ if $|\alpha- a_i|_p<1$ for some $i\in \mathbb{N}$; otherwise, let us construct $s$ as in Theorem~\ref{thm:ffexistence}. It is immediate to check that the resulting function is a floor function.
 
 For each $n \in \mathbb{N}$, let $Q_n$ be $n$-th convergent of $[a_0,a_1,\dots]$. Since we are assuming that $\{Q_n\}_{n \in \mathbb{N}}$ converges $p$-adically to $\alpha_0$, there exists $m\geq 1$ such that $|\alpha_0- Q_m|_p<1$. Then, ultrametric inequality and~\eqref{eqn:QnDiff} yield
 \begin{equation*}
     |\alpha_0-Q_{m-1}|_p\leq \max\{|\alpha-Q_m|_p, |Q_m-Q_{m-1}|_p\}<1.
 \end{equation*}
 Thus, after iterating the above argument $m$ times, we conclude $|\alpha_0-a_0|_p<1$ since $Q_0=a_0$. In particular, $s(\alpha_0)=a_0$ by definition of $s$. Setting $\alpha_{n}=(\alpha_{n-1}-a_{n-1})^{-1}$ for each $n\geq 1$, one can inductively check that $\alpha_n$ is the $p$-adic limit of $[a_n,a_{n+1},\dots]$. Therefore, $|\alpha_n-a_n|_p<1$ by the same argument. This proves that $s(\alpha_n)=a_n$ and $\{a_n\}_{n \in \mathbb{N}}$ (resp. $\{\alpha_n\}_{n \in \mathbb{N}}$) are the partial (resp.\ complete) quotients of the continued fraction expansion of $\alpha_0$, as wanted.
\end{proof}

For any $n \geq -1$, let us define
  \[V_n=A_n-\alpha B_n.\]
Then, one can prove the following useful equalities.

  \begin{proposition}\label{prop:propertiesVn}
      For each $n \geq 1$, the following relations hold:
      \begin{enumerate}[label=\roman*)]
          \item $V_n= V_{n-1} a_n+ V_{n-2}$.
          \item $V_{n-1}\alpha_n + V_{n-2}=0$.
          \item $V_{n-1}=(-1)^{n}\alpha_{1}^{-1}\cdots \alpha_{n}^{-1}$.
          \item $|V_{n-1}|_p=\prod_{j=1}^{n}\frac{1}{|a_j|_p}$.
      \end{enumerate}
  \end{proposition}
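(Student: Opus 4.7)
The plan is to prove the four properties in order, since each one essentially feeds into the next and there is no fundamental obstacle — the non-commutativity is only a mild annoyance that forces care with the side on which each quantity sits.

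For part (i), I would simply substitute the defining recursions $A_n=A_{n-1}a_n+A_{n-2}$ and $B_n=B_{n-1}a_n+B_{n-2}$ into $V_n=A_n-\alpha B_n$ and collect terms. Since $a_n$ appears on the right in both recursions and $\alpha=\alpha_0$ can be factored out on the left of $B_{n-1}a_n$ (scalars in $B$ don't commute in general, but the identity here only needs associativity), we get
\[
V_n=(A_{n-1}-\alpha B_{n-1})a_n+(A_{n-2}-\alpha B_{n-2})=V_{n-1}a_n+V_{n-2}.
\]

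For part (ii), I would invoke Lemma \ref{lem:quattroquindici} (with index shifted by one), which gives $\alpha_0=(A_{n-1}\alpha_n+A_{n-2})(B_{n-1}\alpha_n+B_{n-2})^{-1}$. Multiplying on the right by the denominator and rearranging yields
\[
(A_{n-1}-\alpha_0 B_{n-1})\alpha_n+(A_{n-2}-\alpha_0 B_{n-2})=0,
\]
which is exactly $V_{n-1}\alpha_n+V_{n-2}=0$. The only thing to watch out for is that $\alpha_n$ must stay on the right of $V_{n-1}$ throughout; otherwise the identity is spoiled.

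For part (iii), I would argue by induction on $n$. The base case $n=1$ reduces to $V_0=a_0-\alpha_0$, and since the algorithm \eqref{eqn:contFracAlg} defines $\alpha_1=(\alpha_0-a_0)^{-1}$, we have $-\alpha_1^{-1}=a_0-\alpha_0=V_0$, as required. For the inductive step, (ii) gives $V_{n-1}=-V_{n-2}\alpha_n^{-1}$; plugging in the inductive hypothesis $V_{n-2}=(-1)^{n-1}\alpha_1^{-1}\cdots\alpha_{n-1}^{-1}$ multiplies through to $V_{n-1}=(-1)^n\alpha_1^{-1}\cdots\alpha_n^{-1}$. Finally, for (iv) I would take the $p$-adic absolute value of (iii), use the multiplicativity of $|\cdot|_p$, and apply Lemma \ref{lem:alphaVSa} to replace each $|\alpha_j|_p$ with $|a_j|_p$. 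There is no real obstacle in this proposition; the main thing to be careful about is preserving left/right order when manipulating the quaternionic recursions.
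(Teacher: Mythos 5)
Your proposal is correct and follows essentially the same route as the paper: (i) by direct substitution of the recursions into $V_n=A_n-\alpha B_n$, (ii) from the identity $\alpha_0=(A_{n-1}\alpha_n+A_{n-2})(B_{n-1}\alpha_n+B_{n-2})^{-1}$ of Lemma~\ref{lem:quattroquindici} (the paper re-runs the $\tilde{Q}_n$ substitution argument instead of citing the shifted formula, but this is the same computation), (iii) by iterating $V_{n-1}=-V_{n-2}\alpha_n^{-1}$ with $V_{-1}=1$, and (iv) by multiplicativity of $|\cdot|_p$ together with Lemma~\ref{lem:alphaVSa}. The left/right bookkeeping you flag is exactly the point where care is needed, and you handle it correctly.
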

  
  \begin{proof} \
    \begin{enumerate}[label=\roman*)]
        \item Let us prove it by induction on $n$. The case $n=1$ is an easy verification. Assume that the claim is true for every $m<n$; then by definition we have
        \begin{align*}
            V_n&=A_n - \alpha B_n \\
            &= A_{n-1}a_n + A_{n-2} - \alpha B_{n-1}a_n - \alpha B_{n-2}\\
            &= (\underbrace{A_{n-1}-\alpha B_{n-1}}_{V_{n-1}}) a_n + \underbrace{A_{n-2}-\alpha B_{n-2}}_{V_{n-2}},
        \end{align*}
        as wanted.
        \item We can mimic the proof of Lemma \ref{lem:quattroquindici}: for every $n \geq 0$, let us substitute $a_n$ by $\alpha_n$ in the expression of $V_n$ (resp. $Q_n$) and denote by $\tilde{V}_n$ (resp. $\tilde{Q}_n$) the resulting element. We have already observed $\tilde{Q}_n=\alpha$ for each $n \geq 1$. Thus,
        \[\tilde{V}_n = (\alpha - \tilde{Q}_n)B_n=0.\]
        On the other hand, (i) ensures
        \[\tilde{V}_n = V_{n-1}\alpha_n + V_{n-2},\]
        proving the claim.
        \item From (ii) we get
        \[\alpha_n=-V_{n-1}^{-1}V_{n-2},\] so that
        \begin{align*}
            \alpha_n  \cdots \alpha_1= (-1)^n V_{n-1}^{-1}
        \end{align*}
        since $V_{-1}=1$. 
        \item Follows immediately from (iii) and Lemma \ref{lem:alphaVSa}.
    \end{enumerate}
  \end{proof}
  
\section{Quaternionic heights}
\label{sect:height}
In this section we define a suitable notion of quaternionic height, using the work of Talamanca \cite{Talamanca2004} in which the author defines a height function associated to an adelic norm.

We begin by revising some basic definitions and properties related to such heights. For this, we follow \cite{Talamanca2004}.

\subsection{Local norms over a vector space}
Let $q$ be a rational prime.
 Let $V$ be a finite-dimensional vector space over $\QQ_q$.  A subset $\Omega\subseteq V$ is a \emph{$\ZZ_q$-lattice} if it is a compact open $\ZZ_q$-module.\\
 Every $\ZZ_q$-lattice $\Omega\subseteq V$ defines a \emph{norm} $N_\Omega$ on $V$ by
 $$N_\Omega(\mathbf{v})= \inf_{\lambda\in \QQ_q, \lambda\vv\in\Omega} |\lambda|_q^{-1}.$$
 $N_\Omega$ is an ultrametric norm on $V$, that is
 \begin{itemize}
     \item $N_\Omega(\lambda\vv)=|\lambda |_q N_\Omega(\vv)$ for all $\vv\in V$;
     \item $N_\Omega(\vv_1+\vv_2)\leq \max\{N_\Omega(\vv_1),N_\Omega(\vv_2)\}$ for all $\vv_1,\vv_2\in V$.
 \end{itemize}

\subsection{Adelic norms on vectors spaces over global fields}
Let now $V$ be a $\QQ$-vector space.
Let $M\subseteq V$ be a lattice, that is a finitely generated subgroup containing a basis  of $V$ over $\QQ$. \\
Put $V_q=V\otimes_\QQ \QQ_q$, for every $q\in \calM$, and $M_q=M\otimes_{\ZZ} \ZZ_q$, for every $v\in \calM^0_K$.\\
A family of norms $\calF=\{N_v: V_v\to\RR\ , v\in\calM_K\}$ is said to be an \emph{adelic norm} on $V$ if 
\begin{itemize}
    \item every $N_v$ is a norm on $K_v$, ultrametric if $v\in\calM^0(K)$;
    \item there exists an $\calo_K$-lattice $M\subseteq V$ such that $N_v=N_{M_v}$ for all but finitely many $v\in\calM^0(K)$.
    \end{itemize}
\begin{remark}
The last condition implies that if $\xx\in V$ then $N_v(\xx)=1$ for all but finitely many $v$.
\end{remark}

\subsection{Height function associated to an adelic norm}\label{ssect:heightfunction}
Given an adelic norm $\calF$, the \emph{height function} on $V$ associated to $\calF$ is
$$\calH_\calF(\xx)=\prod_{v\in\calM(K)} N_v(\xx)^{d_v}.$$

Notice that this is well-defined since the product is finite by the remark above.

We will denote by $$\calH(V)=\{\calH_\calF\ |\ \calF\hbox{ is an adelic norm on } V\}$$
the set of height functions associated to adelic norms.
\medskip

\noindent The following properties are proven in \cite[Prop.\,1.1]{Talamanca2004}.

    \begin{itemize} 
    \item By the product formula,
$$\calH_\calF(\lambda \xx)=\calH_\calF(\xx)\hbox{ if }\lambda\in \QQ^\times,$$
so that $\calH_\calF$ descends on a function on $\mathbb{P}(V)$, i.e., the projective space associated to $V$.
\item If $\calH_1,\calH_2\in \calH(V)$, there exists a constant $C=C(\calH_1,\calH_2)>1$ such that, for all $\xx\in V$,
$$\frac 1 C \calH_1(\xx)\leq \calH_2(\xx)\leq C\calH_1(\xx).$$
\item {\bf Northcott property:}
For all $C>0$ the set 
$$\{[\xx ]\in\mathbb{P}(V)\ |\ \calH(\xx )< C\}$$
is finite.
\end{itemize}

\subsection{The quaternionic case}\label{ssect:quatcase}
Now let $B$ be quaternion algebra over $\QQ$, and $R$ be an order in $B$. Then, $R$ is by definition a lattice in $B$. 

We consider the adelic norm $\calF=\{N_v, v\in\calM\}$ on $B$, where
\begin{itemize}
\item $N_v=N_{R_v}$ if $v$ is non-archimedean and $B$ is unramified at $v$;
\item $N_v=|\cdot |_v$ (induced by the discrete valuation) if $v$ is non-archimedean and $B$ is ramified at $v$;
    \item $N_v$ is the operator norm on $M_2(\RR)$ or $M_2(\CC)$ if $v$ is archimedean.
    \end{itemize}

For these norms, it is straightforward to verify that the following multiplicative properties hold.
    
\begin{proposition}
  For every  $v$ the norm $N_v$ is submultiplicative, that is
$$N_v(\xx\cdot \yy)\leq N_v(\xx)N_v(\yy),\quad\forall \xx,\yy\in B.$$
Moreover, if $v$ is non-archimedean and ramified, then $N_v$ is multiplicative 
$$N_{v}(\xx\cdot \yy)= N_{v}(\xx)N_{v}(\yy),\quad\forall \xx,\yy\in B.$$ 
\end{proposition}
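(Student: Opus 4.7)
The plan is to verify the inequality case by case, according to how $N_v$ is defined at the three possible types of places, and to observe that in the non-archimedean ramified case the inequality is actually an equality.

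First I would dispose of the ramified non-archimedean case, which in fact yields the stronger multiplicative statement. By the definition in~\eqref{eqn:pAdicNormDef}, $N_v(\alpha)=|\alpha|_q=(1/q)^{w_q(\alpha)}$ with $w_q(\alpha)=v_q(\nrd(\alpha))/2$. Since the reduced norm is multiplicative on $B_v$, one has $w_q(xy)=w_q(x)+w_q(y)$, hence $N_v(xy)=N_v(x)N_v(y)$ immediately.

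Next, for a non-archimedean place $v$ at which $B$ is unramified, $N_v=N_{R_v}$ is the lattice norm attached to $R_v$. The key observation here is that $R_v$ is not merely a $\ZZ_q$-lattice but an \emph{order}, hence closed under multiplication, so $R_v\cdot R_v\subseteq R_v$. Given any $\lambda,\mu\in\QQ_q^\times$ with $\lambda x,\mu y\in R_v$, one has $\lambda\mu\cdot xy=(\lambda x)(\mu y)\in R_v$, and therefore $N_{R_v}(xy)\le |\lambda|_q^{-1}|\mu|_q^{-1}$. Passing to the infimum over such $\lambda$ and $\mu$ yields $N_v(xy)\le N_v(x)N_v(y)$.

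Finally, at the archimedean place, $B_\infty$ is isomorphic either to $M_2(\RR)$ (indefinite case) or to the real Hamilton quaternions realised as a subring of $M_2(\CC)$ (definite case), and $N_v$ is by definition the usual operator norm on $2\times 2$ matrices. Submultiplicativity then follows from the standard fact $\|AB\|\le \|A\|\|B\|$ for operator norms and, in the definite case, from the fact that the embedding $B_\infty\hookrightarrow M_2(\CC)$ is a ring homomorphism, so the inequality restricts. There is no genuine obstacle in any of the three steps; the only care needed is to match each case with the appropriate definition of $N_v$ and, in the unramified case, to use that $R_v$ is an order rather than just a lattice.
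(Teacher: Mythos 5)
Your proposal is correct, and it carries out exactly the case-by-case verification that the paper intends: the paper states the proposition without proof, remarking only that "it is straightforward to verify," and your three cases (multiplicativity of $\nrd$ for the ramified valuation norm, closure of the order $R_v$ under multiplication for the lattice norm, and submultiplicativity of the operator norm at the archimedean place) are precisely the details being left to the reader. The one point worth making explicit in the unramified case is that the infimum defining $N_{R_v}$ ranges over $\lambda$ with $\lambda\xx\in R_v$, so your choice of $\lambda,\mu$ and passage to the infimum is exactly the right bookkeeping.
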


\section{A Criterion for finiteness}
\label{sect:criteria}
Let $\tau=(B,R,p,s)$ be a quaternionic type. In this section, we prove a sufficient criterion to decide whether a type satisfies the QCFF. 
To do this, we first need to prove an elementary result regarding real linear recurrence sequences.

For any $x\in\CC$, let us define
$$\theta(x)=\frac 1 2(|x|_\infty +\sqrt{|x|_\infty^2+4});$$
then, we have the following inequality: 
$$|x|_\infty\leq \theta(x)\leq |x|_\infty+1, $$
and the map $\theta$ is a bijection from $[0,+\infty)$ to $[1,+\infty)$ whose inverse is given by $y\mapsto y-\frac 1 y$. 

\begin{lemma}\label{lem:matrixnorm} Let $(c_n)_{n\ge 1}$ be any sequence of real numbers $\geq 0$ and let $(t_n)_{n \ge -1}$ be a sequence of real numbers  $\geq 0$ satisfying, for every $n\ge 1$ the inequality:
$$t_n\leq c_nt_{n-1}+t_{n-2}.$$
Then, there exists $c>0$ such that, for every $n\geq 0$,
$$\max\{t_n,t_{n-1}\}\leq c\cdot \prod_{j=1}^n \theta(c_j) .$$
\end{lemma}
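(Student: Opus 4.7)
The plan is to linearize the recurrence inequality into an equality, rephrase it as a matrix recursion, and bound the operator norm of the resulting product of symmetric $2\times 2$ matrices.

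First, I would reduce to the equality case. Define an auxiliary sequence $(u_n)_{n\ge -1}$ by $u_{-1}=t_{-1}$, $u_0=t_0$, and $u_n=c_nu_{n-1}+u_{n-2}$ for $n\ge 1$. Since $c_j\ge 0$ and the initial data are nonnegative, a straightforward induction shows $0\le t_n\le u_n$ for every $n\ge -1$. Hence it suffices to prove the claim with $t_n$ replaced by $u_n$.

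Next, I would rewrite the recurrence in matrix form. Setting
\[
M_n=\begin{pmatrix} c_n & 1\\ 1 & 0\end{pmatrix},\qquad \mathbf{u}_n=\begin{pmatrix} u_n\\ u_{n-1}\end{pmatrix},
\]
one has $\mathbf{u}_n=M_n\mathbf{u}_{n-1}$ for $n\ge 1$, and by iteration $\mathbf{u}_n=M_nM_{n-1}\cdots M_1\mathbf{u}_0$. The key point is that $M_n$ is real symmetric with characteristic polynomial $\lambda^2-c_n\lambda-1$; its positive root is, by the defining formula of $\theta$, precisely $\theta(c_n)$, while its other root is $-1/\theta(c_n)$, of absolute value $\le \theta(c_n)$ since $\theta(c_n)\ge 1$. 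Hence the spectral radius of $M_n$ equals $\theta(c_n)$, and because $M_n$ is symmetric its Euclidean operator norm coincides with the spectral radius: $\|M_n\|_{\mathrm{op}}=\theta(c_n)$.

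Submultiplicativity of the operator norm then gives $\|M_n\cdots M_1\|_{\mathrm{op}}\le \prod_{j=1}^n\theta(c_j)$, and therefore
\[
\max\{u_n,u_{n-1}\}\le \|\mathbf{u}_n\|_2\le \|M_n\cdots M_1\|_{\mathrm{op}}\cdot\|\mathbf{u}_0\|_2\le \Bigl(\prod_{j=1}^n\theta(c_j)\Bigr)\sqrt{t_0^2+t_{-1}^2}.
\]
One then takes $c=\max\{1,\sqrt{t_0^2+t_{-1}^2}\}$, which also handles the trivial case $n=0$ (where the product is empty). The only step requiring a genuine computation is the identification $\|M_n\|_{\mathrm{op}}=\theta(c_n)$, which reduces to reading off the positive root of $\lambda^2-c_n\lambda-1$ from the definition of $\theta$; the reduction to equality, the matrix formulation, and the use of submultiplicativity are all routine.
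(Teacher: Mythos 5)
Your proof is correct and follows essentially the same route as the paper: both pass to the matrix recursion with $M_n=\begin{pmatrix} c_n & 1\\ 1 & 0\end{pmatrix}$, identify $\|M_n\|_{\mathrm{op}}=\theta(c_n)$, and conclude by submultiplicativity of the operator norm. The only cosmetic differences are that you first majorize $(t_n)$ by the exact solution $(u_n)$ (the paper instead compares the nonnegative vectors $(t_n,t_{n-1})$ and $M_n(t_{n-1},t_{n-2})$ directly inside the norm) and that you compute $\|M_n\|_{\mathrm{op}}$ as the spectral radius of the symmetric matrix $M_n$ rather than as $\sqrt{\lambda_{\max}(M_nM_n^*)}$, which is the same thing here.
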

\begin{proof}
For any complex matrix $M$, let us consider the operator norm 
\begin{align*}
||M|| &= \sup_{{\bf v}\not={\bf 0}} \frac {||M{\bf v}||}{||{\bf v}||},
\end{align*}
where $||{\bf v}||$ denotes the Euclidean norm of a complex vector. The following facts are well known (see for example \cite[§5]{Horn2013}):
\begin{itemize}
    \item $||M_1\cdot M_2||\leq ||M_1||\cdot ||M_2||$;
    \item  $||M||=\sqrt{|\gamma|_\infty}$, where $\gamma$ is the dominant eigenvalue of $M\cdot M^*$ (here $M^*$ denotes the transpose conjugate of $M$).
\end{itemize}
In particular, we have that, for every $a\in\CC$,
\begin{equation*}
\left |\left | \begin{pmatrix} a & 1\\ 1 & 0\end{pmatrix} \right |\right |=\theta(a).\end{equation*} 
 Let $\mathcal{M}_n=\begin{pmatrix} c_n & 1\\ 1 &0 \end{pmatrix}$; 
then, for every $n\geq 1$ we have
$$ \left|\left | \begin{pmatrix} t_n\\t_{n-1}\end{pmatrix} \right|\right |\leq  \left|\left |\mathcal{M}_n\begin{pmatrix} t_{n-1}\\t_{n-2}\end{pmatrix}\right|\right |\leq \left|\left |\mathcal{M}_n \right |\right| \left|\left | \begin{pmatrix} t_{n-1}\\t_{n-2}\end{pmatrix}\right|\right|=\theta(c_n)\left|\left | \begin{pmatrix} t_{n-1}\\t_{n-2}\end{pmatrix}\right|\right|,$$
so that 
$$ \max\{|t_n|_\infty, |t_{n-1}|_\infty\}\leq \left|\left | \begin{pmatrix} t_n\\t_{n-1}\end{pmatrix} \right|\right |\leq c\cdot \prod_{j=1}^n \theta(c_j),$$
with $c=\left|\left | \begin{pmatrix} t_0\\t_{-1}\end{pmatrix} \right|\right |$, as wanted.
\end{proof} 

\medskip
Consider the adelic norm $\calF$ defined in Section \ref{ssect:quatcase} and
let $\calH$ be the height function associated to $\calF$ as in Section \ref{ssect:heightfunction}.
Let $\alpha\in B$ and put $V_n=A_n-\alpha B_n$; then, by Proposition \ref{prop:propertiesVn} $iv)$,
\begin{itemize}
    \item $N_{p}(V_n)=\left |\prod_{j=1}^n \frac 1 {a_j}\right|_{p}= \prod_{j=1}^n \frac 1 {| a_j|_{p}}=(\frac 1 p)^{\sum_{j=1}^n w_p(a_j)}$, where
$w_p$ is the discrete valuation on $B_{p}$;
\item for a non-archimedean $q\not=p$,
$$N_q(V_n)\leq \max\{N_q(A_n),N_q(\alpha)N_q(B_n)\}\leq \max\{N_q(\alpha),1\};$$
\item for the archimedean $v=\infty$,
\begin{align*}
    N_\infty(V_n)&=N_\infty(V_{n-1}a_n+V_{n-2})\\
    &\leq N_\infty(V_{n-1})N_\infty(a_n)+N_\infty(V_{n-2})&&&\text{(by triangular inequality}\\ & &&& \text{and submultiplicativity)}\\
    &\leq C_\infty(\alpha)\prod_{j=1}^n \theta(N_\infty(a_j)) &&& \text{(by  Lemma \ref{lem:matrixnorm}).}
\end{align*}
\end{itemize}

These estimates for the norms allow us to prove a criterion to characterize the elements of the quaternion algebra having finite continued fraction expansion.

Following the terminology introduced in \cite{MVV2020} in the real case and in \cite{CapuanoMurruTerracini2021}, we shall say that a quaternionic type $\tau$ satisfies the \emph{Quaternionic Continued Fraction Finiteness ($\QCFF$)}  property if every $\alpha \in B$ has a finite expansion of type $\tau$. We say that a pair $(B,R)$ has the \emph{$p$-adic $\QCFF$ property} if there exists a quaternionic type $(B,R,p,s)$ enjoying the $\QCFF$ property. We have the following result.

\begin{theorem}
\label{teo:muForInfiniteCF}
    Let $\tau=(B,R,p,s)$ be a quaternionic type and let $\alpha\in B$ having an infinite continued fraction expansion
    $[a_0,a_1,a_2\ldots]$ of type $\tau$.
   Assume that there exists an eventual upper bound $\mu_\alpha$ for the sequence
   $$\left \{\frac{\theta(N_\infty(a_n))}{|a_n|_{p}}\right \}_{n\in\NN};$$
   then, $\mu_\alpha \geq  1$.
    \end{theorem}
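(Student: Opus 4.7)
My plan is to argue by contradiction. Suppose $\mu_\alpha<1$; then there exists $N_0$ such that $\theta(N_\infty(a_n))/|a_n|_p\le \mu_\alpha$ for every $n\ge N_0$. The strategy is to combine the three local estimates for $N_v(V_n)$ displayed just before the statement into an upper bound for the height $\calH(V_n)$, to show this bound tends to zero, and then to contradict the Northcott property applied to the nonzero $V_n\in B$.

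First I would expand
$$\calH(V_n)=N_\infty(V_n)\cdot N_p(V_n)\cdot\prod_{q\ne p\text{ prime}} N_q(V_n)$$
(all local degrees equal $1$ since the base field is $\QQ$) and substitute the three bounds. The $p$-adic factor is \emph{exactly} $\prod_{j=1}^n |a_j|_p^{-1}$ by Proposition \ref{prop:propertiesVn}(iv); the archimedean factor is at most $C_\infty(\alpha)\prod_{j=1}^n \theta(N_\infty(a_j))$ by Lemma \ref{lem:matrixnorm}; and for each unramified prime $q\ne p$ one has $N_q(V_n)\le \max\{N_q(\alpha),1\}$. The product of these latter factors over all $q$ is finite by the defining property of an adelic norm ($N_q(\alpha)=1$ for all but finitely many $q$). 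Packaging everything into one constant $C_\alpha$ depending only on $\alpha$, I obtain
$$\calH(V_n)\le C_\alpha\prod_{j=1}^n\frac{\theta(N_\infty(a_j))}{|a_j|_p}.$$

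Next, splitting the right-hand product at $j=N_0$ and bounding each factor with $j\ge N_0$ by $\mu_\alpha$, I get $\calH(V_n)\le C' \mu_\alpha^{n-N_0+1}$ for a constant $C'$ depending on $\alpha$ and on the initial partial quotients only. Because $\mu_\alpha<1$, this yields $\calH(V_n)\to 0$. To apply Northcott I still need $V_n\ne 0$: this is immediate from Proposition \ref{prop:propertiesVn}(iii), since the expansion is infinite and hence every complete quotient $\alpha_j$ is nonzero, making $V_{n-1}=(-1)^n\alpha_1^{-1}\cdots\alpha_n^{-1}\ne 0$. Now the Northcott property says that $\{[\xx]\in\mathbb{P}(B):\calH(\xx)<1\}$ is finite; hence for $n$ large the classes $[V_n]$ lie in this finite set. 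Since $\calH$ descends to $\mathbb{P}(B)$ and is strictly positive there, the sequence $\{\calH(V_n)\}$ eventually takes values in a finite set of positive numbers, contradicting $\calH(V_n)\to 0$.

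The main technical point is simply the bookkeeping that hides behind the constant $C_\alpha$: one must be careful to invoke the adelic-norm property at the finite places so that the factors $\max\{N_q(\alpha),1\}$ assemble into a finite number independent of $n$, and to use Lemma \ref{lem:matrixnorm} rather than a naive triangular iteration at the archimedean place, since otherwise the constant would depend on $n$. Once the three local contributions are properly decoupled as a product of a constant and $\prod_j \theta(N_\infty(a_j))/|a_j|_p$, the contradiction with Northcott is mechanical.
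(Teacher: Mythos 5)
Your proposal is correct and follows essentially the same route as the paper: the same three local estimates are multiplied into the bound $\calH(V_n)\le C(\alpha)\prod_{j=1}^n \theta(N_\infty(a_j))/|a_j|_p$, the hypothesis $\mu_\alpha<1$ forces $\calH(V_n)\to 0$, and the Northcott property gives the contradiction. Your splitting of the product at $N_0$ (to account for $\mu_\alpha$ being only an \emph{eventual} bound) and your explicit check that $V_n\neq 0$ via Proposition \ref{prop:propertiesVn}(iii) are small points of extra care that the paper leaves implicit, not a different argument.
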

    \begin{proof}
        We denote by
        $$ C(\alpha) = C_\infty(\alpha)\cdot \prod_{ q\not=p }\max\{N_q(\alpha),1\};$$
then
\begin{align*}
      \calH(V_n) &= \prod_{v\in\calM} N_v(V_n)\\
      &\leq C_\infty(\alpha) \prod_{j=1}^n \theta(N_\infty(a_j))\cdot \prod_{ q\not=p }\max\{N_q(\alpha),1\}\cdot\prod_{j=1}^n \frac 1 {| a_j|_{p}}\\
     & \leq  C(\alpha)\cdot \mu_\alpha^n. \end{align*}  
     If $\mu_\alpha<1$, then $\calH(V_n)\to 0$. By the Northcott property, we have that $V_n=0$ for $n\gg 0$ and the continued fraction is finite, giving a contradiction.
\end{proof}

This implies the following criterion for the QCFF property.

\begin{corollary}
Let $\tau=(B,R,p,s)$ be a quaternionic type. Define
\[
\mu=\sup\left\{\frac{\theta(N_\infty(a))}{|a|_{p}}\ |\  a\in s(B), |a|_{p}>1\right\}.
\]
If $\mu<1$, then $\tau$ satisfies the $QCFF$ property.
\end{corollary}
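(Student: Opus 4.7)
The plan is a short contradiction argument that reduces the corollary directly to Theorem \ref{teo:muForInfiniteCF}. Assume the $\QCFF$ property fails for $\tau$, so there exists some $\alpha\in B$ whose continued fraction expansion $[a_0,a_1,\ldots]$ is infinite. I want to extract from $\mu$ an eventual upper bound $\mu_\alpha<1$ for the sequence $\{\theta(N_\infty(a_n))/|a_n|_{p}\}_n$ and then invoke Theorem \ref{teo:muForInfiniteCF} to get a contradiction.

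The first check is that every partial quotient lies in $s(B)$, not merely in $s(B_p)$. Since $s$ takes values in $B$ and $\alpha_0=\alpha\in B$, we have $a_0=s(\alpha_0)\in B\cap s(B_p)\subseteq s(B)$. As $B$ is a division algebra and $a_0\in B$, the quantity $\alpha_1=(\alpha_0-a_0)^{-1}$ lies in $B$; inductively $\alpha_n\in B$ and hence $a_n=s(\alpha_n)\in s(B)$ for every $n$. Combined with Proposition \ref{prop:an>1}, which gives $|a_n|_p>1$ for $n\geq 1$, this means that each $a_n$ with $n\geq 1$ belongs to the set over which the supremum defining $\mu$ is taken. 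Therefore
\[
\frac{\theta(N_\infty(a_n))}{|a_n|_{p}}\leq \mu <1 \qquad \text{for all }n\geq 1.
\]

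Thus $\mu_\alpha:=\mu$ is an eventual (indeed uniform) upper bound for the sequence appearing in the hypothesis of Theorem \ref{teo:muForInfiniteCF}, and $\mu_\alpha<1$. This contradicts the theorem's conclusion that any such upper bound must be at least $1$. Consequently no $\alpha\in B$ can have an infinite continued fraction expansion of type $\tau$, so $\tau$ satisfies the $\QCFF$ property.

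There is no real obstacle here: the only step requiring any care is the verification that the partial quotients remain in $B$ (and hence in $s(B)$), which is immediate from the fact that $B$ is closed under the ring operations and inversion used by Algorithm \eqref{eqn:contFracAlg}. Once this is observed, the corollary is a direct specialization of Theorem \ref{teo:muForInfiniteCF}.
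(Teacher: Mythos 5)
Your argument is correct and is exactly the intended one: the paper gives no explicit proof of this corollary, treating it as an immediate specialization of Theorem \ref{teo:muForInfiniteCF}, which is precisely what you carry out (including the worthwhile check that the complete quotients stay in $B$ so that $a_n\in s(B)$, and the appeal to Proposition \ref{prop:an>1} for $|a_n|_p>1$). Nothing further is needed.
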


\subsection{Bounded types} A type $\tau=(B,R,p,s)$ is said to be \emph{bounded} if there exists a real number $C>0$ such that $N_\infty(s(B))=\sup \{N_\infty(a)\ |\ a\in s(B) \}<C$. 

\begin{proposition}
For every triple $(B,R,p)$ there exists a floor function $s$ such that the type $(B,R,p,s)$ is bounded.
\end{proposition}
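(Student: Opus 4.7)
By axiom (4) of a $p$-adic floor function, $s$ is determined by its values on a set of coset representatives of $\mathfrak{m} := \{x \in B_p : |x|_p < 1\}$ in $B_p$, and by axiom (2) these values must lie in $R[1/p] = B \cap \bigcap_{q \neq p} R_q$. The problem therefore reduces to picking, for each coset of $B_p/\mathfrak{m}$, a representative in $R[1/p]$ with uniformly bounded $N_\infty$.

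\textbf{Main construction.} Set $s(0) := 0$. For a nonzero coset $[\alpha]$, fix an integer $n = n(\alpha) \geq 0$ with $p^n\alpha \in R_p$; such $n$ exists since $R_p[1/p] = B_p$, and can be taken of size $\log_p |\alpha|_p + O(1)$, the $O(1)$ absorbing the index $[R_p^{\max}:R_p]$. Density of $R$ in $R_p$ then gives $r_0 \in R$ with $r_0 \equiv p^n\alpha \pmod{p^{n+1}R_p}$, and the identity $R \cap p^{n+1}R_p = p^{n+1}R$ shows that all admissible lifts in $R$ form the coset $r_0 + p^{n+1}R \subseteq R$. Now $p^{n+1}R$ is a full $\mathbb{Z}$-lattice of rank $4$ inside the real normed space $(B_\infty, N_\infty)$, of covolume $p^{4(n+1)} \cdot \mathrm{covol}_{B_\infty}(R)$. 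By a standard Minkowski-type covering-radius estimate, there exists $r \in r_0 + p^{n+1}R$ with
\[
N_\infty(r) \le D \cdot p^{n+1},
\]
where $D$ depends only on $R$ and on $N_\infty$. Setting $s(\alpha) := p^{-n}r \in R[1/p]$, one checks
\[
|s(\alpha) - \alpha|_p = p^n\cdot|r - p^n\alpha|_p \le p^n \cdot p^{-(n+1)} = p^{-1} < 1,
\]
and
\[
N_\infty(s(\alpha)) = p^{-n}\cdot N_\infty(r) \le D\,p,
\]
the latter bound being \emph{independent of the coset $[\alpha]$}.

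\textbf{Main obstacle.} The remaining floor-function axioms are routine to check, so $(B,R,p,s)$ is bounded. The heart of the argument lies in the precise cancellation between the Minkowski bound $p^{n+1}$ (forced by the $p^{4(n+1)}$-growth of the covolume of $p^{n+1}R$) and the archimedean rescaling $p^{-n}$ needed to return to the original coset: together they leave only the constant $D\,p$, independent of the $p$-adic depth $n$. A naive special-type truncation $\sum_{k\le 0} c_k \pi^k$ fails precisely because $N_\infty(\pi^{-n})$ grows geometrically with $n$; the Minkowski step succeeds by exploiting the freedom to add arbitrarily large elements of $p^{n+1}R$ to ``smooth out'' the archimedean part of the representative, trading $p$-adic precision we do not need for a short archimedean lift.
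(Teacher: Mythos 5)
Your proof is correct and rests on the same mechanism as the paper's: the admissible representatives in $R[\frac 1 p]$ of a given coset of $\mathfrak{m}$ form a translate of a full-rank lattice in $B_\infty\cong\RR^4$, so one can always select a representative of uniformly bounded archimedean norm. The paper implements this with the single, coset-independent lattice $P=\{x\in R\ :\ |x|_p<1\}$ and a bounded fundamental domain for $B_\infty/P$ (which makes the uniform bound immediate), whereas you clear denominators and work with the coset-dependent lattice $p^{n+1}R$, recovering uniformity from the cancellation between the covering radius $D\,p^{n+1}$ and the rescaling by $p^{-n}$ --- a difference of bookkeeping rather than of substance.
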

\begin{proof}
Let 
$$P=\{x\in R\ |\ |x|_p<1\}.$$
Then, $P$ is a lattice in $B_\infty$, so that there is a bounded fundamental domain $\calD\subseteq B_\infty$ for the quotient $B_\infty/P$.
We construct a $p$-adic floor function $s$ for $B$ as follows. Let $\pi$ be a uniformizer in $R_p$, and let us consider a non trivial coset $\alpha+\pi R_p \subseteq  B_p$; by strong approximation, it contains an element $\alpha'\in B$ such that $\alpha'\in R_q$ for every rational prime $q\not=p$. Possibly translating  $\alpha' $ by a suitable element of $P$, we find a $\beta\in R[\frac 1 p]$ such that $\beta\in \calD$ and $\alpha'\equiv \beta \pmod{P}$.  Then, for every $\gamma\in \alpha+\pi R_p$ we put  $s(\gamma)=\beta$ and $\tau=(B,R,p,s)$.
\end{proof}

\begin{theorem} Assume that $\tau$ is a bounded type; then, there exists a positive integer $K$ such that every infinite continued fraction $[a_0,a_1,\dots]$ of type $\tau$ either represents an element $\alpha\in B_p\setminus B$ or the set
\(\{i \mid  |a_i|_p\leq {\sqrt{p}^K}\}\)
is infinite.
\end{theorem}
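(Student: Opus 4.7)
The statement is a direct consequence of Theorem~\ref{teo:muForInfiniteCF}. The plan is to use the boundedness of the type to force the ratio $\theta(N_\infty(a_n))/|a_n|_p$ strictly below $1$ as soon as $|a_n|_p$ is sufficiently large, and then invoke that theorem to rule out $\alpha \in B$.

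First, I set
\[\Theta := \sup\{\theta(N_\infty(a)) : a \in s(B)\}.\]
Since $\tau$ is bounded, $N_\infty(a)$ is uniformly bounded on $s(B)$, and from $\theta(x) \leq x+1$ I conclude that $\Theta < +\infty$. Next, I observe that the values of $|\cdot|_p$ on $B^\times$ are discrete: for $a \in B^\times$ one has $|a|_p = p^{-v_p(\nrd(a))/2}$ with $v_p(\nrd(a)) \in \ZZ$, so $|a|_p \in \sqrt{p}^\ZZ$. Using this discreteness, I let $K$ be the smallest positive integer such that $\sqrt{p}^{K+1} > \Theta$.

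Finally, I argue by contradiction. Suppose $[a_0,a_1,\dots]$ is an infinite continued fraction of type $\tau$ representing some $\alpha \in B$, and suppose the set $\{i \mid |a_i|_p \leq \sqrt{p}^K\}$ is finite. Then for all $i$ sufficiently large, $|a_i|_p > \sqrt{p}^K$, and the discreteness observed above forces $|a_i|_p \geq \sqrt{p}^{K+1}$. Consequently,
\[\frac{\theta(N_\infty(a_i))}{|a_i|_p} \leq \frac{\Theta}{\sqrt{p}^{K+1}} < 1\]
for all sufficiently large $i$. Any real number $\mu_\alpha \in [\Theta/\sqrt{p}^{K+1},\,1)$ is therefore an eventual upper bound for the sequence appearing in Theorem~\ref{teo:muForInfiniteCF}, which would force $\mu_\alpha \geq 1$: a contradiction. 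Hence $\alpha \in B_p \setminus B$, as required.

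The only subtle point, and the one I would flag as the main obstacle if any, is the exploitation of the discreteness of $|\cdot|_p$ on $B^\times$, which upgrades the a priori non-strict inequality $\Theta/\sqrt{p}^K \leq 1$ to the strict inequality $\Theta/\sqrt{p}^{K+1} < 1$ that Theorem~\ref{teo:muForInfiniteCF} actually requires. Everything else is a straightforward chain of estimates.
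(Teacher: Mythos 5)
Your proof is correct and follows essentially the same route as the paper's: bound $\theta(N_\infty(a_n))$ uniformly using boundedness of the type, use the discreteness of $|\cdot|_p$ (values in $\sqrt{p}^{\ZZ}$) to upgrade $|a_i|_p>\sqrt{p}^K$ to $|a_i|_p\geq\sqrt{p}^{K+1}$ for large $i$, conclude that the ratio is eventually $<1$, and invoke Theorem~\ref{teo:muForInfiniteCF}. The only difference is cosmetic (you choose $K$ minimal with $\sqrt{p}^{K+1}>\Theta$ where the paper takes $C<\sqrt{p}^K$ and bounds $\frac{C+1}{\sqrt{p}^{K+1}}\leq\frac{\sqrt{p}^K+1}{\sqrt{p}^{K+1}}<1$), and you make explicit the discreteness step that the paper uses implicitly.
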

\begin{proof}
Since $\tau$ is bounded, there exists a real number $C>0$ such that $N_\infty(a_n)<C$ for every $n\ge 0$.
Choose $K>0$ such that
$C<\sqrt{p}^{K}.$ Assume that $\alpha\in B_p$ has an infinite expansion of type $\tau$ in which only finitely many partial quotients have absolute value $\leq  {\sqrt{p}^K}$; 
we want to show that $\alpha\not\in B$. By hypothesis 
$N_\infty(a_n)\leq C$, so that, for $n\gg 0$,
$$\frac{\theta(N_\infty(a_n))}{|a_n|_p
}\leq \frac{C+1}{\sqrt{p}^{K+1}}\leq\frac{\sqrt{p}^K+1}{\sqrt{p}^{K+1}}<1.$$
Therefore $\mu_\alpha<1$ and we can apply Theorem \ref{teo:muForInfiniteCF} to get the conclusion.
\end{proof}

\section{Construction of a $p$-adic type when $\Delta=pq$}\label{sect:Bpq}
Fix a prime $p\geq 3$. We construct an indefinite division quaternion algebra $B$ ramified at $p$, with the further requirement that $B$ is ramified only at $p$ and at another place $q\neq \infty$ (we recall that the number of ramified places of a quaternion algebra must be even~\cite[Thm.\,14.6.1]{voight:quaternion}). To ensure this, it is enough~\cite[Lem.\,1.21]{Montserrat2004} to choose a prime $q\equiv 1 \mod 4$  such that $\left(\frac{p}{q}\right)=-1$ and set $B=\alg{q}{p}{\mathbb{Q}}$. If $p \equiv 3 \mod 8$, also $q=2$ can be chosen.

\medskip
\par Consider the \emph{standard order} $R'=\mathbb{Z} + \mathbb{Z}i + \mathbb{Z}j + \mathbb{Z} ij$. One can prove~\cite[Ex.\,15.2.10]{voight:quaternion} that the reduced discriminant of this order is $2pq$, while the discriminant of $B$ (i.e.\ the product of all ramified places) is $pq$ by construction. Therefore $R'$ is not maximal~\cite[Thm.\,15.5.5]{voight:quaternion}. However, it is contained in a unique maximal order $R$ by~\cite[Prop.\,1.32.iii]{Montserrat2004}. By~\cite[Prop.\,1.60]{Montserrat2004},
 $R$ has an explicit expression of the form:
\[R=\begin{cases}
\mathbb{Z} + \mathbb{Z}i +\mathbb{Z}j+ \mathbb{Z}\frac{1+i+j+ij}{2}\qquad &\text{if $q=2$,}\\
\mathbb{Z} + \mathbb{Z}i +\mathbb{Z}\frac{1+j}{2}+ \mathbb{Z}\frac{i+ij}{2}\qquad& \text{otherwise.}
\end{cases}\]


\par We mimic the classic construction of continued fractions in $\mathbb{Q}_p$ given by Browkin~\cite{Browkin1978}. A natural choice is to use a special type as described in Section~\ref{sec:contFrac}. 

First, we choose a set $\mathcal{C}$ of representatives for $R/jR$:
\begin{equation*}
    \mathcal{C}=\left\{a+ b i \mid a,b \in \left\{0,\pm 1, \dots, \pm \frac{1}{2}(p-1)\right\}\right\}.
\end{equation*}
Thus, given $\alpha \in B$, we can define a floor function as follows:
\begin{itemize}
    \item write $\alpha$ as the series
    \[\alpha=\sum_{\ell=r}^\infty \alpha_\ell j^\ell\]
    where $\alpha_\ell\in \mathcal{C}$ for each $\ell$.
    \item set
    \begin{equation*}
        s(\alpha)=\begin{cases}
    0 \qquad&\text{if $r>0$,}\\
    \sum_{\ell=r}^0 \alpha_\ell j^\ell \qquad&\text{if $r\leq 0$.}
    \end{cases}
    \end{equation*}
\end{itemize}
\par It is clear that the elements of $\mathbb{Q}$, seen as a subset of $B$, enjoy a finite continued fraction expansion with respect to the special type $\tau=(B,R,j,\mathcal{C})$ constructed above. In fact, their expansion coincides with the classic Browkin continued fraction expansion considered in~\cite{Browkin1978}. However, there exist elements whose continued fraction expansion is infinite.
\begin{theorem}
\label{thm:QCFFfail}
  The type $\tau=(B,R,j,\mathcal{C})$ constructed above does not satisfy the QCFF property.
\end{theorem}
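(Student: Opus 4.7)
The strategy is to prove QCFF fails by exhibiting a concrete element of $B$ whose continued fraction expansion of type $\tau$ is infinite. A natural candidate is $\alpha_0 = i^{-1} = i/q \in B$, which has $|\alpha_0|_p = 1$ since $q \in \ZZ_p^\times$.

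First I would compute the initial step of the algorithm. Reducing modulo $jR \cong \FF_{p^2}$, one finds $a_0 = bi$, where $b \in \{-(p-1)/2,\dots,(p-1)/2\}$ is the symmetric representative of $q^{-1} \bmod p$. Using $ij^2 = j^2 i$ and $i^{-1} = i/q$, this gives
\[ \alpha_1 \;=\; (\alpha_0 - a_0)^{-1} \;=\; \frac{1}{m}\, i\, j^{-2}, \qquad m = \frac{1-bq}{p}\in \ZZ, \]
so $\alpha_1 \in \QQ\cdot ij^{-2} \subseteq B_p$.

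The second step is to iterate and observe that every subsequent $\alpha_n$ remains on the same $\QQ$-line: writing $\alpha_n = r_n\, ij^{-2}$ with $r_n \in \QQ$, the algorithm reduces to a one-dimensional recursion $r_{n+1} = 1/(q\, r_n')$, where $r_n'$ is obtained from $r_n$ by stripping the two leading symmetric $p$-adic digits (two digits because $j^2 = p$, so stepping from $ij^{-2}$ back to $ij^{-2}$ corresponds to shifting by $p^2$). A short estimate on numerators and denominators — using that each digit satisfies $|d_k|\leq (p-1)/2$ — shows that the $r_n$ take values in a finite subset of $\QQ$, so the orbit is eventually periodic. To rule out termination (an occurrence $\alpha_n = a_n$), I would invoke the symmetry $s(-\beta) = -s(\beta)$, which holds because $\mathcal{C}$ is closed under negation: a direct computation shows $\alpha_{k+\ell} = -\alpha_k$ for some $k,\ell \geq 1$, forcing $\alpha_{k+2\ell} = \alpha_k$ and making the orbit genuinely periodic of period $2\ell$, never hitting a termination state. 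Consequently $i/q \in B$ has an infinite $\tau$-continued fraction, disproving QCFF.

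The main obstacle is the bookkeeping required to bound the numerators and denominators of the $r_n$ uniformly for general pairs $(p,q)$, and to handle a few edge cases — such as $p \mid m$ in the first step (which forces $|\alpha_1|_p > p$ and changes the shape of the subsequent complete quotient), or degeneracies where $\alpha_0 = i/q$ itself happens to terminate. These can be circumvented by replacing $\alpha_0$ with a minor perturbation, for instance $r + i/q$ for a convenient rational $r$, since QCFF only requires the existence of \emph{some} $\alpha \in B$ with infinite expansion.
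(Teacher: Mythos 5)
Your overall strategy (exhibit one explicit element of $B$ with infinite expansion) is the same as the paper's, but the element you chose and the mechanism you propose for non-termination both fail. The paper works with $\alpha_0=\frac{1}{q}\bigl(i+\frac{1}{p}ij\bigr)$, i.e.\ on the line $\QQ\beta$ with $\beta=i+\frac1p ij$, for which $\beta^{-1}=\frac{p}{q(p-1)}\beta$; each inversion therefore injects the factor $q(p-1)$ into the denominator, and a Bézout-identity bookkeeping shows that the denominator of $\alpha_n$ is \emph{alternately} divisible by $q$ and by $p-1$, with no cancellation against the numerator. Hence the denominator is never $\pm1$, which is exactly the condition for the algorithm to continue forever. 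No boundedness or periodicity of the orbit is needed (and indeed the paper only gets periodicity in the special case $q\mid p-1$).

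Your version breaks down at two points. First, the claim that the $r_n$ ``take values in a finite subset of $\QQ$'' is unsupported: writing $\alpha_n=\frac{k_1}{k_2}\cdot\frac{i}{p}$, one step of the algorithm sends the denominator $k_2$ to $qv$ with $|v|\le \frac{|k_1|}{p^2}+\frac{|k_2|}{2}$, so there is no a priori bound and no eventual periodicity in general. Second, and more seriously, on the line $\QQ i$ the factor $q$ coming from $i^{-1}=i/q$ cancels at the very next step (the new numerator is the old denominator, which already carries that factor $q$), so no persistent divisibility obstruction survives and the denominator \emph{can} reach $\pm1$. Concretely, take $p=3$, $q=17$ (a legitimate pair: $17\equiv 1 \bmod 4$ and $\bigl(\tfrac{3}{17}\bigr)=-1$). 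Starting from $\alpha_0=i/17$ in $B=\alg{17}{3}{\QQ}$ one computes successively
\begin{equation*}
\alpha_1=\tfrac{i}{18},\quad \alpha_2=\tfrac{2i}{51},\quad \alpha_3=\tfrac{i}{12},\quad \alpha_4=\tfrac{4i}{51},\quad \alpha_5=\tfrac{i}{24},\quad \alpha_6=\tfrac{8i}{51},\quad \alpha_7=-\tfrac{i}{3},
\end{equation*}
and $-\frac{i}{3}=(-i)j^{-2}$ lies in $s(B)$, so $\alpha_7=a_7$ and the expansion of $i/q$ is \emph{finite}. Thus your candidate does not witness the failure of QCFF, and the closing remark that a ``minor perturbation'' of $\alpha_0$ fixes matters is not a proof: identifying a perturbation that provably never terminates is precisely the hard part, and the paper's answer is to move to the different line $\QQ\bigl(i+\frac1p ij\bigr)$, where the two alternating factors $q$ and $p-1$ cannot both be cancelled.
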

\begin{proof}
Suppose that the $n$-th complete quotient of some continued fraction has the following form: \[\alpha_n=\frac{k_1}{k_2p^r}\left(i+\frac{1}{p}ij\right),\]
where $k_1, k_2$ are coprime integers not divided by $p$, $k_2 \notin\{-1,1\}$ and $r\geq0$. We write the Bézout's identity for $p^{r+1}$ and $k_2$, i.e.
\[
vp^{r+1} + wk_2=k_1,
\]
choosing the integers $v,w$ in such a way that $w\in \{-(p^{r+1}-1)/2,\dots, (p^{r+1}-1)/2\}$. 
Therefore, one can check that
\[
a_n=s(\alpha_n)=\frac w {p^r}\left(i+\frac{1}{p}ij\right).
\]
Moreover, since $k_2 \notin\{-1,1\}$, we have that $v \neq 0$, so the continued fraction does not terminate and the next complete quotient is
\begin{align}
\label{eqn:alphaexample}
    \alpha_{n+1}=(\alpha_n-a_n)^{-1}=\frac{k_2p^r}{vp^{r+1}}\cdot \left(i+\frac{1}{p}ij\right)^{-1}=\frac{k_2}{q(p-1)v}\left(i+\frac{1}{p}ij\right).
\end{align}
We claim that the continued fraction expansion of \[\alpha_0=\frac{1}{q}\left(i+\frac{1}{p}ij\right)\]
is infinite. Indeed, by~\eqref{eqn:alphaexample},
\begin{align*}
    \alpha_{1}=\frac{1}{(p-1)v_1}\left(i+\frac{1}{p}ij\right),
\end{align*}
where $v_1$ satisfies Bézout's identity
\begin{equation}
    \label{eqn:bez1}
    v_1p+w_1q=1
\end{equation} for some integer $w_1$. After writing $v_1=v_1'p^{r_1}$ with $p \nmid v_1'$ and $r_1\geq0$, the next complete quotient can be computed  using~\eqref{eqn:alphaexample}:
\begin{align*}
    \alpha_{2}=\frac{v_1'}{qv_2}\left(i+\frac{1}{p}ij\right),
\end{align*}
where $v_2$ satisfies the equality 
\begin{equation}
    \label{eqn:bez2}
    v_2p^{r_1+1}+w_2(p-1)v_1'=1
\end{equation}for some integer $w_2$. Notice that $q$ cannot divide $v_1'$: otherwise, the contradiction  $1 \equiv 0 \mod q$ would follow from~\eqref{eqn:bez1}. Moreover, $v_1'$ and $v_2$ have no common factors because of~\eqref{eqn:bez2}. Similarly, from~\eqref{eqn:alphaexample} we get
\begin{align*}
    \alpha_{3}=\frac{v_2'}{(p-1)v_3}\left(i+\frac{1}{p}ij\right)
\end{align*}
where $v_2=v_2'p^{r_2}$ with $p \nmid v_2'$  and $r_2\geq0$, 
\[v_3p^{r_2+1}+w_3 q v'_2=v_1'\]
for some integer $w_3$, and $p-1$ does not divide $v_2'$ because $v_2p+w_2(p-1)v_1'=1 \not \equiv 0 \mod (p-1)$. Moreover, $v_3$ and $v_2'$ have no common factors (otherwise, such factor would be also a common factor of $v_1'$ and $v_2$). The form of each $\alpha_n$ can now be derived by induction.
Namely, first define the sequence  $\{v_n\}_{n\in \NN}$ as follows: $v_{-1}=v_0=1$, and, for each $n\geq1$,
$v_n$ is the unique integer such that
\begin{equation*}
 v_n p^{r_{n-2}+r_{n-1}+1} + w_nCv_{n-1}p^{r_{n-2}}=p^{r_{n-1}}v_{n-2}\qquad \text{with}\quad C=
    \begin{cases}
   p-1\quad &\text{if $n$ is even},\\
   q\quad &\text{if $n$ is odd},
    \end{cases}
\end{equation*}
for some $w_n\in \{-{(p^{r_{n-2}+r_{n-1}+1}-1)}/{2},\dots,{(p^{r_{n-2}+r_{n-1}+1}-1)}/{2}\}$, where $r_n$ denotes the $p$-adic valuation of $v_n$.
Then we have, for every $n\geq 0$,
\[\alpha_{n}=\frac{v_{n-1}'}{C' v_n}\left(i+\frac{1}{p}ij\right)\qquad \text{with} \quad C'=\frac{(p-1)q}{C}\quad \text{and}\quad v_{n-1}'=\frac{v_{n-1}}{p^{r_{n-1}}}.\]
In particular, the denominator in $\alpha_n$ is always a multiple of either $q$ or $p-1$, so that it cannot be either $1$ or $-1$. As a consequence, the continued fraction expansion of $\alpha_0$ never stops, which proves the claim.
\end{proof}

\begin{corollary}
    If $q$ divides $p-1$, then the element
    \[\alpha_0=\frac{1}{q}\left(i+\frac{1}{p}ij\right)\]
    has purely periodic continued fraction expansion. The period has length $1$ if $p=3$ and $q=2$, and $2$ otherwise.
\end{corollary}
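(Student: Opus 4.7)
The plan is to exploit the explicit recursive description of the complete quotients $\alpha_n$ derived in the proof of Theorem~\ref{thm:QCFFfail}. Writing $\alpha_n = \frac{v_{n-1}'}{C'\,v_n}\bigl(i + \frac{1}{p}ij\bigr)$, I would compute $v_1$ and $v_2$ directly and verify that $\alpha_2 = \alpha_0$ in general, with the stronger equality $\alpha_1 = \alpha_0$ occurring only in the exceptional case. Since $\alpha_0 = \frac{1}{q}(i + \frac{1}{p}ij)$ corresponds to the initial data $v_{-1} = v_0 = 1$ and $r_{-1} = r_0 = 0$, the recursion becomes very simple for the first two steps.

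For $n=1$ (odd, so $C = q$), the defining relation collapses to the Bézout identity
\[
v_1\, p + w_1\, q = 1 \qquad \text{with } w_1 \in \left\{-\tfrac{p-1}{2},\dots,\tfrac{p-1}{2}\right\}.
\]
Under the hypothesis $q \mid p-1$, the integer $w_1 = -(p-1)/q$ lies in this range because $q \geq 2$, and the resulting solution is $v_1 = 1$. Hence $\alpha_1 = \frac{1}{p-1}\bigl(i + \frac{1}{p}ij\bigr)$. In the special sub-case $p=3$, $q=2$ we have $p-1 = q$, so $\alpha_1 = \alpha_0$ and the expansion is purely periodic of length $1$, namely $[\,\overline{a_0}\,]$. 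Outside this sub-case $\alpha_1 \neq \alpha_0$: equality would force $q = p-1$, and since both $p$ and $q$ must be prime, the only possibility is $(p,q) = (3,2)$.

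For $n=2$ (even, so $C = p-1$), using $v_1 = 1$ and $r_1 = 0$ the recursion becomes
\[
v_2\, p + w_2\,(p-1) = 1 \qquad \text{with } w_2 \in \left\{-\tfrac{p-1}{2},\dots,\tfrac{p-1}{2}\right\}.
\]
The choice $w_2 = -1$ is admissible and yields $v_2 = 1$, whence $\alpha_2 = \frac{1}{q}\bigl(i + \frac{1}{p}ij\bigr) = \alpha_0$. This shows the continued fraction is purely periodic with period exactly $2$, i.e.\ $[\,\overline{a_0, a_1}\,]$.

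There is no substantive obstacle: the entire argument reduces to solving two elementary Bézout identities and checking that the minimal-residue coefficients $w_1, w_2$ fall within the prescribed ranges. The only mildly delicate point is distinguishing period $1$ from period $2$, which is handled by the observation that $q = p-1$ with $p$, $q$ both prime forces $(p,q) = (3,2)$.
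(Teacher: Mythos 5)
Your proof is correct and follows essentially the same route as the paper: both specialize the recursion from the proof of Theorem~\ref{thm:QCFFfail} to the initial data $v_{-1}=v_0=1$, solve the two Bézout identities $v_1p+w_1q=1$ (with $w_1=-(p-1)/q$ admissible precisely because $q\mid p-1$ and $q\geq 2$) and $v_2p+w_2(p-1)=1$ (with $w_2=-1$), and conclude $\alpha_2=\alpha_0$. Your additional check that the period is exactly $2$ unless $q=p-1$, which forces $(p,q)=(3,2)$, is a small but welcome refinement that the paper leaves implicit.
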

\begin{proof}
    This is just a specialization of the proof of Theorem~\ref{thm:QCFFfail}. Namely, we can write Bézout's identity explicitly as
    \[p+ \frac{1-p}{q}\cdot q=1,\]
    so that, by~\eqref{eqn:alphaexample},
    \[\alpha_1=\frac{1}{p-1}\left(i+\frac{1}{p}ij\right).\]
    Bézout's identity is now
    \[p + (-1)(p-1)=1,\]
    which gives $\alpha_2=\alpha_0$.
\end{proof}

\section{Roots of some quadratic quaternionic polynomials}\label{sect:polinomi}
In this section we show how Theorem~\ref{teo:muForInfiniteCF} can be exploited to study the roots of a family of quadratic polynomials\,\footnote{Here we generalize in an obvious way the notion of polynomial to this non-commutative setting.} with coefficients in $B$.

A link between quadratic equations over quaternion algebras and continued fractions has already been considered by Hamilton~\cite{Hamilton1852} in the special case $B=\alg{-1}{-1}{\mathbb{R}}$. The general case of quadratic equations over $\alg{-1}{-1}{\mathbb{R}}$ has been dealt with in~\cite{Littlewood1930}. Namely, an equation
\begin{equation}
    \label{eqn:quatQuadric}
\sum_{\ell=1}^{n}\alpha_{0,\ell}X\alpha_{1,\ell}X\alpha_{2,\ell} + \sum_{\ell'=1}^{n'} \beta_{0,\ell'}X\beta_{1,\ell'} + \gamma_0=0,
\end{equation}
with $\alpha_{0,\ell},\alpha_{1,\ell},\alpha_{2,\ell},\beta_{0,\ell'},\beta_{1,\ell'},\gamma_0\in B$, can be rewritten in terms of the components of $X$, say $x_0,\dots,x_3$, with respect to the standard generators of $B$, obtaining
\[f_0(x_0,\dots,x_3) + f_1(x_0,\dots,x_3) i + f_2(x_0,\dots,x_3) j + f_3(x_0,\dots,x_3)  ij=0\]
where $f_0,\dots, f_3$ are quadratic polynomials with real coefficients. Thus, $\alpha=t + xi + yj+ zij\in B$ is a solution of~\eqref{eqn:quatQuadric} if and only if $(t,x,y,z)\in \mathbb{R}^4$ is a solution of the polynomial system
\begin{equation*}
    \begin{cases}
       f_0(x_0, \dots, x_3)=0\\
       f_1(x_0, \dots, x_3)=0\\
       f_2(x_0, \dots, x_3)=0\\
       f_3(x_0, \dots, x_3)=0.
    \end{cases}
\end{equation*}
Therefore, \eqref{eqn:quatQuadric} has either no solution, up to $16$ solutions or infinitely many.
\par Littlewood's arguments can be straightforwardly generalized to quaternion algebras over arbitrary fields. However, to the best of our knowledge, not much more is known about quadratic equations with coefficients in a quaternion algebra $B$ over~$\mathbb{Q}$.

\subsection{Roots of $X^2-aX-1$}
 An element $a\in B$ is said to be \emph{integral} if its minimum polynomial over $\QQ$ has integral coefficients; similarly, $a\in B$ is said to be \emph{$p$-integral} if its minimum polynomial over $\QQ$ has coefficients in $\ZZ[\frac 1 p]$. 
 
\begin{lemma}\label{lem:interoinordine} \
\begin{itemize}
    \item[a)] Let $a\in B$ be integral. Then, there is an order $R$ in $B$ such that $a\in R$.
    \item[b)] Let $a\in B$ be $p$-integral. Then there exists an order $R\in B$ such that $a\in R[\frac 1 p]$.
    \item[c)]  Let $a\in B$  be a $p$-integral element such that $|a|_p>1$. Then, there exists a quaternionic type $\tau=(B,R,p,s)$ such that $a\in s(B)$.
\end{itemize}
\end{lemma}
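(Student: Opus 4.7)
My plan is to handle the three parts sequentially, since (c) uses (b) and (b) uses (a).

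For (a), I would distinguish two cases. If $a \in \QQ$, then integrality forces $a \in \ZZ$, which is contained in every order, so the claim is trivial. Otherwise, $F := \QQ(a)$ is a proper subfield of $B$; since $B$ has discriminant $\Delta > 1$ it is a division algebra, so $F/\QQ$ is a quadratic field extension and a maximal subfield of $B$. By Skolem--Noether, the non-trivial Galois automorphism $\sigma$ of $F/\QQ$ lifts to an inner automorphism of $B$: there exists $j \in B^\times$ with $jxj^{-1} = \sigma(x)$ for every $x \in F$. A short calculation gives $j^2 \in Z(B) = \QQ^\times$, and after rescaling by a rational integer we may assume $j^2 \in \ZZ$. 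Then $B = F \oplus Fj$ as $\QQ$-vector spaces, and I would set $R = \ZZ[a] + \ZZ[a]\cdot j$. To check closure under multiplication, I expand
\[(x_1 + x_2 j)(y_1 + y_2 j) = \bigl(x_1 y_1 + x_2 \sigma(y_2) j^2\bigr) + \bigl(x_1 y_2 + x_2 \sigma(y_1)\bigr)\, j,\]
and use that $\sigma(\ZZ[a]) = \ZZ[a]$ (because $\sigma(a) = \trd(a) - a$ with $\trd(a) \in \ZZ$) together with $j^2 \in \ZZ$. Thus $R$ is an order containing $a$.

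For (b), the minimal polynomial of $a$ has the form $X^2 - tX + n$ with $t, n \in \ZZ[\frac{1}{p}]$. Choosing $N$ so that $p^N t, p^{2N} n \in \ZZ$, the element $p^N a$ is integral, since it satisfies $X^2 - p^N t \cdot X + p^{2N} n = 0$. Applying (a) yields an order $R$ with $p^N a \in R$, whence $a \in R[\frac{1}{p}]$.

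For (c), I apply (b) to obtain an order $R$ with $a \in R[\frac{1}{p}]$, and then construct a $p$-adic floor function $s$ for $(B,R)$ with $s(a) = a$. Partition $B_p$ into cosets of the open ball $U := \{x \in B_p : |x|_p < 1\}$. Because $|a|_p > 1$, the cosets $a + U$ and $0 + U$ are distinct. Prescribe $s \equiv a$ on $a + U$ and $s \equiv 0$ on $U$; on every other coset, apply the construction of Theorem~\ref{thm:ffexistence} to produce a suitable value lying in $R[\frac{1}{p}]$. The four axioms then hold: closeness and ball-constancy are immediate from the coset-wise construction; $s(0) = 0$ is prescribed; and $s$ takes values in $R[\frac{1}{p}] = \bigcap_{q \neq p} R_q$ (the value $a$ lies there by our choice of $R$, $0$ trivially, and the Theorem~\ref{thm:ffexistence} values by design). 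Hence $a = s(a) \in s(B)$, as required.

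The main obstacle is part (a): producing an order containing an arbitrary integral element. The explicit $F \oplus Fj$ construction settles this cleanly, but hinges on Skolem--Noether and an honest ring-closure verification. Parts (b) and (c) are then routine rescalings and adaptations of constructions already present in the paper.
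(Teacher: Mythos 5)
Your proof is correct and follows essentially the same route as the paper's: Skolem--Noether to produce the conjugating element for part (a), rescaling by a power of $p$ for part (b), and a floor function prescribed to equal $a$ on the coset $a+U$ for part (c). If anything, your part (a) is slightly more careful than the paper's, which writes $a=n+\sqrt{d}$ with $n,d\in\ZZ$ and builds an order around $\sqrt{d}$ (thereby glossing over integral elements such as $\frac{1+\sqrt{d}}{2}$), whereas your $R=\ZZ[a]+\ZZ[a]\cdot j$ with the explicit closure computation handles every integral $a$ directly.
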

\begin{proof}  If $a\in \ZZ$, then $a$ lies in every order, so we can assume $a\not\in \ZZ$. In this case $\ZZ[a]$ is an order in a quadratic field $K=\QQ(a)\subseteq B$. Then, we can write $a=n+\sqrt{d}$ with $n,d\in\ZZ$, so that it suffices to show that there is an order $R$ in $B$ containing $x=\sqrt{d}$. By the Skolem-Noether theorem \cite[Thm.\,1.2.1]{voight:quaternion}, there exists an element $y\in R$ such that the $yxy^{-1}=-x$. Then, it is immediate to see that $\ZZ[x,y]$ is an order containing $x$, proving the first part.

  Point  $b)$ is an immediate consequence of $a)$ applied to $p^ka$ for a suitable $k\in \NN$. Finally, we deduce part $c)$ by considering an order $R$ such that $a\in R[\frac 1 p]$ and a floor function $s$ such that $s(a+pR_p)=a$.
\end{proof}

Thanks to the previous lemma, we are able to prove a result about the existence of roots of certain quadratic polynomials. 

\begin{proposition}    \label{prop:special_poly}
    Let $a \in B$ be such that
    \begin{itemize}
        \item $|a|_p>1$ for some odd prime $p$,
        \item $a$ is $p$-integral,
        \item $\frac{\theta(N_\infty(a))}{|a|_{p}}<1$.
    \end{itemize}
    Then, the polynomial $f(X)=X^2-aX-1$ has no root in $B$.
\end{proposition}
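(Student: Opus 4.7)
The plan is to argue by contradiction: assume there is a root $\alpha \in B$ of $X^2 - aX - 1$, manufacture from it a quaternionic continued fraction whose partial quotients are all equal to $a$, and then show that this violates Theorem~\ref{teo:muForInfiniteCF}.

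First I would rewrite the equation as $\alpha = a + \alpha^{-1}$, equivalently $(\alpha - a)^{-1} = \alpha$. Since $|a|_p > 1$ and the values of $|\cdot|_p$ on $B_p^\times$ are discrete, I would use the ultrametric inequality to deduce $|\alpha|_p = |a|_p$, hence $|\alpha - a|_p = |\alpha^{-1}|_p = 1/|a|_p < 1$. This is the key $p$-adic estimate that will make the floor function behave as desired.

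Next, I would invoke Lemma~\ref{lem:interoinordine}(c), which, thanks to the hypotheses that $a$ is $p$-integral and $|a|_p>1$, yields a quaternionic type $\tau = (B,R,p,s)$ with $a \in s(B)$. Picking $\beta \in B_p$ with $s(\beta)=a$, the defining properties of $s$ give $|a-\beta|_p = |s(\beta)-\beta|_p < 1$, hence by the constancy axiom $s(a)=s(\beta)=a$. Combining this with $|\alpha-a|_p<1$, the same axiom forces $s(\alpha)=s(a)=a$. Then the continued fraction algorithm~\eqref{eqn:contFracAlg} applied to $\alpha_0 = \alpha$ gives $a_0 = a$ and $\alpha_1 = (\alpha-a)^{-1} = \alpha = \alpha_0$, so it never terminates and produces the purely periodic expansion $[\overline{a}]$ of type $\tau$.

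Finally, I would apply Theorem~\ref{teo:muForInfiniteCF}. Since $a_n = a$ for every $n$, the sequence $\{\theta(N_\infty(a_n))/|a_n|_p\}$ is constant, so we may take $\mu_\alpha = \theta(N_\infty(a))/|a|_p$. By hypothesis $\mu_\alpha < 1$, contradicting the conclusion $\mu_\alpha \geq 1$ of the theorem. I expect no real obstacle in this argument: the only delicate point is checking carefully that the floor function provided by Lemma~\ref{lem:interoinordine}(c) actually satisfies $s(\alpha)=a$, which is why I want to spell out the chain $s(\alpha)=s(a)=s(\beta)=a$ using the constancy axiom on balls of radius $<1$ rather than relying on a stronger (and unwarranted) statement like $\alpha \in a + pR_p$.
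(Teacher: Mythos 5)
Your overall strategy is essentially the paper's: use Lemma~\ref{lem:interoinordine}(c) to produce a type $\tau=(B,R,p,s)$ with $a\in s(B)$, show that a root $\alpha$ of $f$ satisfying $s(\alpha)=a$ has the purely periodic expansion $[\overline{a}]$ (since $\alpha_1=(\alpha-a)^{-1}=\alpha$), and then contradict Theorem~\ref{teo:muForInfiniteCF} via the constant bound $\mu_\alpha=\theta(N_\infty(a))/|a|_p<1$. Your chain $s(\alpha)=s(a)=s(\beta)=a$ through the constancy axiom is correct and makes explicit a point the paper leaves implicit. Organizing the argument as a direct contradiction, rather than first exhibiting the two roots in $B_p$ and then proving there are no others, is a legitimate (and slightly leaner) packaging of the same ideas.

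There is, however, one genuine gap: the claim that the ultrametric inequality forces $|\alpha|_p=|a|_p$ for every root $\alpha\in B$. From $\alpha=a+\alpha^{-1}$ one can only conclude $|\alpha|_p=\max\{|a|_p,|\alpha|_p^{-1}\}$ when the two summands have distinct absolute values; the degenerate case $|a|_p=|\alpha|_p^{-1}$, i.e.\ $|\alpha|_p=1/|a|_p<1$, is not excluded by discreteness (the value group of $|\cdot|_p$ on $B_p^\times$ certainly contains $1/|a|_p$), and it genuinely occurs: if $\alpha'$ denotes the limit of $[\overline{a}]$, then $\alpha''=-(\alpha')^{-1}$ is also a root of $f$ and has $|\alpha''|_p=1/|a|_p$. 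For a root of this kind your computation fails, since then $s(\alpha)=0\neq a$. The repair is exactly the step the paper takes (``up to replacing $\alpha$ by $-\alpha^{-1}$''): the map $\alpha\mapsto-\alpha^{-1}$ permutes the roots of $f$ even in this non-commutative setting, because $(\alpha-a)\alpha=1$ gives $\alpha-a=\alpha^{-1}$ and hence $(-\alpha^{-1})^2-a(-\alpha^{-1})-1=(\alpha^{-1}+a-\alpha)\,\alpha^{-1}=0$, and it clearly preserves membership in $B$. So if some root lies in $B$, you may assume after this replacement that $|\alpha|_p>1$, whence $|\alpha|_p=|a|_p$, and your argument then runs as written.
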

\begin{proof}
 Lemma~\ref{lem:interoinordine} and
    Proposition~\ref{prop:uniquenessCF} ensure that there exist an order $R$ and a floor function $s$ such that the periodic continued fraction $[\overline{a}]$ is a continued fraction of type $\tau=(B,R,p,s)$. The $p$-adic limit of $[\overline{a}]$, say $\alpha'$, annihilates $f(X)$ and does not lie in $B$ by Theorem~\ref{teo:muForInfiniteCF}. It is immediate to check that the same holds for $\alpha''=-1/\alpha'$.
   In order to conclude, we only need to prove that $f(X)$ has at most two roots. Let $\alpha\in B_p$ be another root; up to replacing $\alpha$ by $\-\alpha^{-1}$, we can assume that $|\alpha|_p=|a|_p>1$ and $|-\alpha^{-1}|_p<1$. It follows that $s(\alpha)=a$, hence $\alpha$ and $\alpha'$ have the same continued fraction expansion with respect to the type $\tau$. Therefore $\alpha=\alpha'$.
\end{proof}
\begin{remark}
 If $a\notin \mathbb{Q}$, proving that $f(X)$ has only two roots is even easier. In fact, if $\alpha$ is a root of $f(X)$, then $\alpha\neq 0$ and $a=\alpha-1/\alpha\in\mathbb{Q}(\alpha)$. Equivalently, each root of $f(X)$ belongs to $\mathbb{Q}(a)$. Since $\mathbb{Q}(a)$ is a field, it contains no more than two roots of $X^2-aX-1$, proving the claim directly.
\end{remark}

\subsection{More general quadratic polynomials}

The previous argument can be exploited to prove a more general version of Proposition~\ref{prop:special_poly} for a larger class of quadratic polynomials.

\begin{theorem}
\label{cor:quadEquationCF}
    Let $B$ be a division quaternion algebra over $\mathbb{Q}$, $p$ an odd prime at which $B$ ramifies, and $a_0,\dots, a_n$ a sequence of elements in $B$ for some $n\geq0$ such that, for each $i \in \{0,\dots,n\}$,
     \begin{itemize}
        \item $|a_i|_p>1$ ,
        \item there exists a maximal order $R\subset B$ such that $a_i\in R_q$ for every prime $q \neq p$,
        \item if $|a_i-a_j|_p<1$, then $a_i=a_j$.
        \item $\frac{\theta(N_\infty(a_i))}{|a_i|_{p}}<1$.
    \end{itemize}
    Define the sequences $A_0,\dots,A_n$ and $B_0,\dots,B_n$ as in Section~\ref{sec:contFrac}. Then, the polynomial
    \begin{equation}
    \label{eqn:deg2Pol}
        XB_nX + XB_{n-1} - A_n X - A_{n-1}
    \end{equation}
    has no root in $B$.
\end{theorem}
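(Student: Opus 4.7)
The strategy is to generalize the argument of Proposition~\ref{prop:special_poly}. I would first apply Proposition~\ref{prop:uniquenessCF} to the infinite periodic sequence $[\overline{a_0,\ldots,a_n}]$, whose three conditions are inherited verbatim from the hypotheses of the theorem. This yields a quaternionic type $\tau=(B,R,p,s)$ whose partial quotients are $a_0,a_1,\ldots,a_n,a_0,\ldots$ and whose $p$-adic limit $\alpha\in B_p$ has this periodic continued fraction expansion. Since the complete quotients satisfy $\alpha_{n+1}=\alpha_0=\alpha$, Lemma~\ref{lem:quattroquindici} gives $\alpha=(A_n\alpha+A_{n-1})(B_n\alpha+B_{n-1})^{-1}$, i.e.\ $\alpha$ is a root of~\eqref{eqn:deg2Pol}. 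The hypothesis $\theta(N_\infty(a_i))/|a_i|_p<1$ for every $i$ then yields an eventual bound $\mu_\alpha<1$, so Theorem~\ref{teo:muForInfiniteCF} forces $\alpha\notin B$.

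Now suppose, for contradiction, that $\beta\in B$ is a root (necessarily nonzero, since $A_{n-1}\neq 0$); I plan to show that $\beta$ has exactly the same continued fraction expansion as $\alpha$, so that $\beta=\alpha$ by Proposition~\ref{prop:CFconvergence}, the desired contradiction. Rearranging the equation as
\[
(\beta-a_0)(B_n\beta+B_{n-1}) \;=\; (A_n-a_0B_n)\beta + (A_{n-1}-a_0B_{n-1})
\]
and using an easy induction identifying $A_k-a_0B_k$ with the $(k-1)$-th convergent denominator $\hat B_{k-1}$ of the shifted CF $[a_1,a_2,\ldots]$, the multiplicativity of $|\cdot|_p$ at the ramified prime $p$ (combined with the explicit formula $|B_k|_p=\prod_{j=1}^k|a_j|_p$) delivers $|\beta-a_0|_p=1/|a_1|_p<1$, provided one knows $|\beta|_p=|a_0|_p$. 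By the construction of $s$ in Proposition~\ref{prop:uniquenessCF} this forces $s(\beta)=a_0$, and the next complete quotient $\beta_1=(\beta-a_0)^{-1}$ satisfies the analogous fixed-point equation for the cyclic shift $(a_1,\ldots,a_n,a_0)$. Iterating $n+1$ times (or, equivalently, setting $\beta_{n+1}=\beta$ and reading the fixed-point equation backwards) recovers the full periodic expansion.

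The main obstacle is thus the valuation equality $|\beta|_p=|a_0|_p$. An ultrametric analysis of the four summands in the original equation, together with $|A_k|_p=|a_0|_p|B_k|_p$, narrows the possibilities for $|\beta|_p$ to $|a_0|_p$ and the degenerate value $1/|a_n|_p$. The latter is immediately ruled out in the full-cancellation subcase $B_n\beta+B_{n-1}=0=A_n\beta+A_{n-1}$, since that would force $\beta=-B_n^{-1}B_{n-1}=-A_n^{-1}A_{n-1}$, hence $Q_n=Q_{n-1}$, contradicting~\eqref{eqn:QnDiff}. Excluding the remaining partial-cancellation subcase is the delicate technical point; I would handle it either by a finer $p$-adic analysis (the equation forces the very specific small value $|B_n\beta+B_{n-1}|_p=1/|A_n|_p$ for an element of $B$) or by applying the same strategy to the inverse $\gamma=\beta^{-1}$, which satisfies the related quadratic $XA_{n-1}X+XA_n-B_{n-1}X-B_n=0$ obtained by inverting the fixed-point relation, and which lies in the ``good'' range $|\gamma|_p=|a_n|_p>1$.
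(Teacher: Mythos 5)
The first half of your plan --- producing the root $\alpha'$ as the $p$-adic limit of $[\overline{a_0,\dots,a_n}]$ via Proposition~\ref{prop:uniquenessCF}, identifying it as a root of \eqref{eqn:deg2Pol} through Lemma~\ref{lem:quattroquindici}, and excluding $\alpha'\in B$ via Theorem~\ref{teo:muForInfiniteCF} --- is exactly what the paper does and is correct.

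The gap is in the uniqueness half, and you have flagged it yourself: the ``partial-cancellation subcase'' is not a removable degeneracy but the heart of the matter. The polynomial genuinely has a second root $\alpha''\in B_p$ with $|\alpha''|_p<1$, namely the inverse of the opposite of the limit of the reversed fraction $[\overline{a_n,\dots,a_0}]$; so no argument can force every root to share the expansion of $\alpha'$, and one must instead show that every root in $B_p$ is either $\alpha'$ or $\alpha''$ and that \emph{both} lie outside $B$. Your fallback --- passing to $\gamma=\beta^{-1}$ and asserting that it satisfies the fixed-point equation of the reversed fraction --- rests on a non-commutative ``mirror formula'' relating $(A_{n-1},A_n,B_{n-1},B_n)$ to the convergents of $[a_n,\dots,a_0]$. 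This does not follow from transposing $\mathcal{B}_n=\mathcal{A}_0\cdots\mathcal{A}_n$, because over a non-commutative ring the transpose of a product is not the reversed product of the transposes; the identity has to be proved. The paper does exactly this by a direct telescoping computation showing $A_{k(n+1)+n}^{-1}A_{k(n+1)+n-1}=1/\tilde{Q}_{k(n+1)}$ (and similarly for the $B$'s), where the $\tilde{Q}_i$ are the convergents of the reversed fraction, and then closes the argument differently from your plan: writing the root condition at depth $k(n+1)+n$ for every $k$, it factors it as $(\alpha B_\epsilon-A_\epsilon)(\alpha-\alpha'')=(\text{terms of $p$-adic size}<\epsilon)$, so that $\alpha\neq\alpha''$ forces $|\alpha-A_\epsilon B_\epsilon^{-1}|_p\to 0$, i.e.\ $\alpha=\lim Q_{k(n+1)+n}=\alpha'$. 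Until you either establish the mirror identity or carry out the ``finer $p$-adic analysis'' you allude to, the uniqueness step is incomplete.
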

\begin{proof}
 Let $\alpha$ be a root of~\eqref{eqn:deg2Pol}. Equivalently, by~\eqref{eqn:alpha0Qn},
 \begin{equation}
 \label{eqn:periodicCF}
     \alpha=a_0 + \cfrac{1}{a_1+ \cfrac{1}{a_2+\cfrac{1}{\ddots + \cfrac{1}{a_n + \cfrac{1}{\alpha}}}}}.
 \end{equation}
 Proposition~\ref{prop:uniquenessCF} ensures that there exists a floor function $s$ such that $[\overline{a_0, \dots, a_n}]$ is a continued fraction of type $\tau=(B,R,p,s)$. The $p$-adic limit of $[\overline{a_0, \dots, a_n}]$, say $\alpha'$, satisfies~\eqref{eqn:periodicCF} and does not lie in $B$ by Theorem~\ref{teo:muForInfiniteCF}.\\
 Furthermore, following a well-known result for the classical case~\cite{Galois1828}, we remark that~\eqref{eqn:periodicCF} can be rewritten as follows:
\begin{equation*}
(\alpha-a_0)^{-1}=a_1+\cfrac{1}{a_2+\cfrac{1}{\ddots + \cfrac{1}{a_n + \cfrac{1}{\alpha}}}},
\end{equation*}
which gives
\begin{equation*}
((\alpha-a_0)^{-1}-a_1)^{-1}=a_2+\cfrac{1}{\ddots +\cfrac{1}{a_n + \cfrac{1}{\alpha}}},
\end{equation*}
and so by induction 
\begin{equation*}
a_n + \cfrac{1}{a_{n-1}+ \cfrac{1}{a_{n-2}+\cfrac{1}{\ddots + \cfrac{1}{a_0 + \cfrac{1}{-1/\alpha}}}}}=-\frac{1}{\alpha}.
\end{equation*}

 Therefore, another root of~\eqref{eqn:deg2Pol}, say $\alpha''$, is the inverse of the opposite of the $p$-adic limit of $[\overline{a_n, \dots, a_0}]$, which is a continued fraction of type $\tau$ by Proposition~\ref{prop:uniquenessCF}. Moreover, $\alpha''$ does not lie in $B$ by Theorem~\ref{teo:muForInfiniteCF}.
 
 In order to conclude, we need to show that~\eqref{eqn:deg2Pol} has no root in $B_p$ other than $\alpha'$ and $\alpha''$. Define $A_{n+1},A_{n+2},\dots$ and $B_{n+1},B_{n+2},\dots$ the sequences associated to $[\overline{a_0, \dots, a_n}]$. Since $a_{n+1}=a_0$, the recursive formulas defining $A_i$ and $B_i$ yield
 \begin{align}
    \label{eqn:AiBiPeriodicRecurrence}
    A_{k(n+1)+\ell}&=A_{k(n+1)+\ell-1}a_\ell + A_{k(n+1)+\ell-2},\\ B_{k(n+1)+\ell}&=B_{k(n+1)+\ell-1}a_\ell + B_{k(n+1)+\ell-2}, \nonumber
\end{align}
    for every $k\geq 1$ and $\ell \in \{0,\dots, n\}$, and $\alpha$ is a root of~\eqref{eqn:deg2Pol} if and only if
    \begin{align}
    \nonumber
        \alpha B_{k(n+1)+n}\alpha+ \alpha B_{k(n+1)+n-1}&=A_{k(n+1)+n} \alpha +A_{k(n+1)+n-1}\\ \label{eqn:alphaEqForDeg2}
        \alpha B_{k(n+1)+n}\left(\alpha+  B_{k(n+1)+n}^{-1} B_{k(n+1)+n-1}\right)&= A_{k(n+1)+n} \left(\alpha +A_{k(n+1)+n}^{-1}A_{k(n+1)+n-1}\right)
    \end{align}
    for every $k\geq0$. We remark that~\eqref{eqn:AiBiPeriodicRecurrence} allows rewriting $A_{k(n+1)+n}^{-1}A_{k(n+1)+n-1}$ as follows:
    \begin{align*}
        A_{k(n+1)+n}^{-1}A_{k(n+1)+n-1}&=\frac{1}{A_{k(n+1)+n-1}^{-1}A_{k(n+1)+n}}\\
        &=\frac{1}{a_n+ A_{k(n+1)+n-1}^{-1}A_{k(n+1)+n-2}}\\
        &=\cfrac{1}{a_n+ \cfrac{1}{a_{n-1} + A_{k(n+1)+n-2}^{-1}A_{k(n+1)+n-3}}}\\
        &=\cfrac{1}{a_n+ \cfrac{1}{a_{n-1} + \cfrac{1}{\ddots + \cfrac{1}{a_0 + A_{(k-1)(n+1)+n}^{-1}A_{(k-1)(n+1)+n-1}}}}}\\
        &=\cfrac{1}{a_n+ \cfrac{1}{a_{n-1} + \cfrac{1}{\ddots+ \cfrac{1}{a_0}}}}\\
        &=\frac{1}{\tilde{Q}_{k(n+1)}}
    \end{align*}
    for every $k\geq1$, where $\{\tilde{Q}_i\}_{i \in \mathbb{N}}$ is the sequence of convergents of $[\overline{a_n,\dots,a_0}]$. The same argument yields $B_{k(n+1)+n}^{-1}B_{k(n+1)+n-1}=1/\tilde{Q}_{k(n+1)-1}$. Denote by $\{Q_i\}_{i \in \mathbb{N}}$ the convergents of $[\overline{a_0,\dots,a_n}]$. By Proposition~\ref{prop:CFconvergence}, for any $\epsilon>0$ there exists $k_\epsilon \in \NN$ such that
    \begin{align*}
        \max\left\{\left|Q_{k_\epsilon(n+1)+n}-\alpha'\right|_p,\left|\alpha''+\frac{1}{\tilde{Q}_{k_\epsilon(n+1)-1}}\right|_p, \left|\alpha''+\frac{1}{\tilde{Q}_{k_\epsilon(n+1)}}\right|_p\right\}<\epsilon.
    \end{align*}
    We ease the notation by setting $A_\epsilon=A_{k_\epsilon(n+1)+n}$ and $B_\epsilon=B_{k_\epsilon(n+1)+n}$, and rewrite~\eqref{eqn:alphaEqForDeg2} as
    \begin{align*}
    \alpha B_\epsilon\left(\alpha+  \frac{1}{\tilde{Q}_{k_\epsilon(n+1)-1}}\right)&= A_\epsilon \left(\alpha+ \frac{1}{\tilde{Q}_{k_\epsilon(n+1)}}\right)\\
        (\alpha B_\epsilon - A_\epsilon)\left(\alpha -{\alpha''}\right)&= -\alpha B_\epsilon\left(\alpha''+\frac{1}{\tilde{Q}_{k_\epsilon(n+1)-1}}\right)+ A_\epsilon\left(\alpha''+\frac{1}{\tilde{Q}_{k_\epsilon(n+1)}}\right).
    \end{align*}
    Thus, if $\alpha \neq \alpha''$ the following estimates hold:
    \begin{align*}
    \left|\alpha-A_\epsilon B_\epsilon^{-1} \right|_p&\leq \epsilon\cdot \frac{1}{ \left|\alpha-\alpha''\right|_p} \cdot \max\left\{|\alpha|_p, \left|A_\epsilon B_\epsilon^{-1}\right|_p\right\}\\
        \left|\alpha-A_\epsilon {B_\epsilon}^{-1} \right|_p&\leq \epsilon\cdot \frac{1}{ \left|\alpha-{\alpha''}\right|_p} \cdot \max\left\{|\alpha|_p, |\alpha'|_p,\epsilon \right\}.
    \end{align*}
    This proves that $\alpha$ converges $p$-adically to $\alpha'$, i.e.\ $\alpha=\alpha'$.
\end{proof}

\printbibliography
\end{document}